\newtheorem{theorem}{Theorem}
\newtheorem{corollary}{Corollary}[section]
\newtheorem{proposition}{Proposition}[section]
\newtheorem{lemma}[proposition]{Lemma}
\theoremstyle{definition}
\newtheorem{remark}{Remark}[section]
\theoremstyle{plain}
\begin{document}
\title[Preservers on the set of variance-covariance matrices]{Preservers of partial orders on the set of all variance-covariance matrices}
\author{Iva Golubi\'{c}}
\address[Iva Golubi\'{c}]{University of Applied Sciences Velika Gorica, Zagreba\v{c}ka cesta 5,
10410 Velika Gorica, Croatia}
\email{iva.golubic@gmail.com}
\author{Janko Marovt}
\address[Janko Marovt]{Faculty of Economics and Business, University of Maribor,
Razlagova 14, SI-2000 Maribor, Slovenia}
\email{janko.marovt@um.si}
\keywords{linear model, preserver, L{\"{o}}wner partial order, minus partial order,
variance-covariance matrix}
\subjclass[2010]{62J99, 47B49, 15B48, 47L07, 54F05}

\begin{abstract}
Let $H_{n}^{+}(\mathbb{R})$ be the cone of all positive semidefinite
$n\times n$ real matrices. Two of the best known partial orders that were
mostly studied on subsets of square complex matrices are the L{\"{o}}wner and
the minus partial orders. Motivated by applications in statistics we study
these partial orders on $H_{n}^{+}(\mathbb{R})$. We describe the form of all
surjective maps on $H_{n}^{+}(\mathbb{R})$, $n>1$, that preserve the
L{\"{o}}wner partial order in both directions. We present an equivalent
definition of the minus partial order on $H_{n}^{+}(\mathbb{R})$ and also
characterize all surjective, additive maps on $H_{n}^{+}(\mathbb{R})$,
$n\geq3$, that preserve the minus partial order in both directions.

\end{abstract}
\maketitle

\section{Introduction}

Let $M_{m,n}(\mathbb{F})$ where $\mathbb{F=R}$ or $\mathbb{F=C}$ be the set of
all $m\times n$ real or complex matrices, let $A^{t}\in M_{n,m}(\mathbb{F})$
denote the transpose, $A^{\ast}\in M_{n,m}(\mathbb{F})$ the conjugate
transpose, $\operatorname{Im}A$ the image (i.e. the column space), and
$\operatorname*{Ker}A$ the kernel (the nullspace) of $A\in M_{m,n}%
(\mathbb{F})$. Any matrix which is a solution $X=A^{-}\in M_{n,m}(\mathbb{F})$
to the equation $AXA=A$ is called an inner generalized inverse of $A\in
M_{m,n}(\mathbb{F})$. Note that every matrix $A\in M_{m,n}(\mathbb{F})$ has an
inner generalized inverse (see e.g. \cite{MitraKnjiga}). If $m=n$, then we
will write $M_{n}(\mathbb{F)}$ instead of $M_{n,n}(\mathbb{F)}$. We say that
$A\in M_{n}(\mathbb{F)}$ is symmetric if $A=A^{t}$ and Hermitian (or
self-adjoined) if $A=A^{\ast}$. A symmetric matrix $A\in M_{n}(\mathbb{R)}$ is
said to be positive semidefinite if $x^{t}Ax\geq0$ for every $x\in
\mathbb{R}^{n}$. More generally, a Hermitian matrix $A\in$ $M_{n}(\mathbb{C)}$
is said to be positive semidefinite if $z^{\ast}Az\geq0$ for every
$z\in\mathbb{C}^{n}$. The study of positive semidefinite matrices is a
flourishing area of mathematical investigation (see e.g., the monograph
\cite{Bahtia} and the references therein). Moreover, positive semidefinite
matrices have become fundamental computational objects in many areas of
statistics, engineering, quantum information, and applied mathematics. They
appear as variance-covariance matrices (also known as dispersion or covariance
matrices) in statistics, as elements of the search space in convex and
semidefinite programming, as kernels in machine learning, as density matrices
in quantum information, and as diffusion tensors in medical imaging. It is
known (see e.g. \cite{Christensen}) that every variance-covariance matrix is
positive semidefinite, and that every (real) positive semidefinite matrix is a
variance-covariance matrix of some multivariate distribution.

There are many partial orders which may be defined on various sets of
matrices. We will next present two of the best known. Let $A,B\in
M_{n}(\mathbb{R)}$ be symmetric matrices. Then we say that $A$ is below $B$
with respect to the L{\"{o}}wner partial order and write
\begin{equation}
A\leq^{L}B\quad\text{if}\quad B-A\quad\text{is positive semidefinite.}
\label{Def_Lowner}%
\end{equation}
L{\"{o}}wner partial order has many applications in statistics especially in
the theory of linear statistical models. Let
\[
y=X\beta+\epsilon
\]
be the matrix form of a linear model. Here $y$ is a real $n\times1$ random
vector of observed quantities which we try to explain with other quantities
that determine the matrix $X\in$ $M_{n,p}(\mathbb{R})$. It is assumed that
$E(\epsilon)=0$ and $V(\epsilon)=\sigma^{2}D$, i.e. the errors have the zero
mean and covariances are known up to a scalar (real number). Here $V$ denotes
the variance-covariance matrix. The nonnegative parameter $\sigma^{2}$ and the
vector of parameters (real numbers) $\beta$ are unspecified, and $D\in
M_{n}(\mathbb{R)}$ is a known positive semidefinite matrix. We denote this
linear model with the triplet $(y,X\beta,\sigma^{2}D)$.

Classical inference problems related to the linear model $(y,X\beta,\sigma
^{2}D)$ usually concern a vector linear parametric function (LPF), $A\beta$
(here $A$ is a real matrix with $p$ columns). We try to estimate it by a
linear function of the response $Cy$ (here $C$ is a real matrix with $n$
columns).
We say that the statistic $Cy$ is a linear unbiased estimator (LUE) of
$A\beta$ if $E(Cy)=A\beta$ for all possible values of $\beta\in\mathbb{R}^{p}%
$. A vector LPF is said to be estimable if it has an LUE. The best linear
unbiased estimator (BLUE) of an estimable vector LPF is defined as the LUE
having the smallest variance-covariance matrix. Here, the \textquotedblleft
variance-covariance\textquotedblright\ condition is expressed in terms of the
L{\"{o}}wner order $\leq^{L}$: Let $A\beta$ be estimable. Then $Ly$ is said to
be BLUE of $A\beta$ if (i) $E(Ly)=A\beta$ for all $\beta\in\mathbb{R}^{p}$ and
(ii) $V(Ly)\leq^{L}V(My)$ for all $\beta\in\mathbb{R}^{p}$ and all $My$
satisfying $E(My)=A\beta$.

The second partial order which also has many applications in statistics (see
\cite[Sections 15.3, 15.4]{MitraKnjiga}) may be defined on the full set
$M_{m,n}(\mathbb{R})$. For $A,B\in M_{m,n}(\mathbb{R})$ we say that $A$ is
below $B$ with respect to the minus partial order (know also as the rank
substractivity partial order) and write
\[
A\leq^{-}B\quad\text{when}\quad A^{-}A=A^{-}B\text{ and }AA^{-}=BA^{-}%
\]
for some inner generalized inverse $A^{-}$ of $A$. It is known (see e.g.
\cite{MitraKnjiga}) that for $A,B\in M_{m,n}(\mathbb{R})$,
\begin{equation}
A\leq^{-}B\quad\text{if and only if}\quad\operatorname*{rank}%
(B-A)=\operatorname*{rank}(B)-\operatorname*{rank}(A). \label{eq_rank_minus}%
\end{equation}

Note that both orders may be defined in the same way on sets of complex
matrices \cite{MitraKnjiga}. Moreover, the minus partial order was introduced
by Hartwig in \cite{Hartwig} and independently by Nambooripad in
\cite{Nambooripad} on a general regular semigroup however it was mostly
studied on $M_{n}(\mathbb{F})$ (see \cite{Mitra} and the references therein).
More recently, \v{S}emrl generalized in \cite{Semrl} this order to
$B(\mathcal{H})$, the algebra of all bounded linear opearators on a Hilbert
space $\mathcal{H}$, and studied preservers of this order (see also
\cite{Legisa}). Let $\mathcal{A}$ be some subset of $B(\mathcal{H})$ and
denote by $\leq$ one of the above orders (i.e. $\leq^{L}$ or $\leq^{-}$). We
say that that a map $\varphi:\mathcal{A}\rightarrow\mathcal{A}$ preserves an
order $\leq$ in both directions when
\[
A\leq B\quad\text{if and only if}\quad\varphi(A)\leq\varphi(B)
\]
for every $A,B\in\mathcal{A}$.

Motivated by applications in quantum mechanics and quantum statistics
Moln\'{a}r studied preservers that are connected to certain structures of
bounded linear operators which appear in mathematical foundations of quantum
mechanics, i.e. he studied automorphisms of the underlying quantum structures
or, in other words, quantum mechanical symmetries. Let $A^{\ast}$ be the
adjoint operator of $A\in B(\mathcal{H})$, and let
\[
B^{+}(\mathcal{H})=\left\{  A\in B(\mathcal{H}):A=A^{\ast}\text{ and
}\left\langle Ax,x\right\rangle \geq0\text{ for every }x\in\mathcal{H}%
\right\}
\]
be the set of all positive operators in $B(\mathcal{H})$. Note that in case
when dim$\mathcal{H}<\infty$, the set $B^{+}(\mathcal{H})$ may be identified
with the set of all positive semidefinite $n\times n$ matrices. Note also that
we may generalize definition (\ref{Def_Lowner}) to the set of all
self-adjoined operators in $B(\mathcal{H})$ in the following way: For two
self-adjoined operators $A,B\in$ $B(\mathcal{H})$ we write $A\leq^{L}B$ when
$B-A\in B^{+}(\mathcal{H})$. Under assumption that $\mathcal{H}$ is a complex
Hilbert space with dim$\mathcal{H}>1$, Moln\'{a}r described in \cite{Molnar1}
the form of all bijective maps on $B^{+}(\mathcal{H})$ that preserve the
L{\"{o}}wner partial order in both directions. It turns out that every such a
map $\varphi$ is of the form
\begin{equation}
\varphi(A)=TAT^{\ast},\quad A\in B^{+}(\mathcal{H}) \label{Molnar_result}%
\end{equation}
where $T:$ $\mathcal{H\rightarrow H}$ is an invertible bounded either linear
or conjugate-linear operator. Since we expect that maps preserving the
L{\"{o}}wner order in both directions on the set of all real positive
semidefinite matrices may have interesting applications in statistics (e.g. in
the theory of comparison of linear models \cite{Stepniak}), we will study such
maps in Section 3. We will show that a similar result to Moln\'{a}r's Theorem
1 from \cite{Molnar1} holds also in the real matrix case, i.e. we will
characterize surjective maps (omitting the injectivity assumption) on the set
of all $n\times n$, $n\geq2$, positive semidefinite real matrices that
preserve the order $\leq^{L}$ in both directions. In Section 4, we will study
the minus partial order, search for applications of this order in statistics,
and describe the form of all surjective, additive maps on the set of all
$n\times n$, $n\geq3$, positive semidefinite real matrices that preserve the
minus partial order in both directions.

\section{Preliminaries}

Let us present some tools that will be useful throughout the paper. As before,
let $\mathbb{F=R}$ or $\mathbb{F=C}$. Let $H_{n}(\mathbb{F)}$ be the set of
all Hermitian (symmetric in the real case) matrices in $M_{n}(\mathbb{F})$,
denote by $H_{n}^{+}(\mathbb{F})$ the set of all positive semidefinite
matrices in $H_{n}(\mathbb{F)}$ and by $P_{n}(\mathbb{F})$ the set of
idempotent matrices in $H_{n}^{+}(\mathbb{F})$ (i.e. the set of all orthogonal
projection matrices in $M_{n}(\mathbb{F)}$). Let $V$ be a subspace of
$\mathbb{F}^{n}$. By $P_{V}\in P_{n}(\mathbb{F})$ we will denote the
orthogonal projection matrix with $\operatorname{Im}P_{V}=V$. Recall that a
convex cone $\mathcal{C}$ is a subset of a vector space $\mathcal{V}$ over an
ordered field that is closed under all linear combinations with nonnegative
scalars. For every convex cone $\mathcal{C}$, we will from now on assume that
$\mathcal{C}\cap(-\mathcal{C})=\{0\}$. Observe that then every convex cone
$\mathcal{C}$ induces a partial ordering $\leq$ on $\mathcal{V}$ so that we
write%
\[
x\leq y\quad\text{when}\quad y-x\in\mathcal{C}.
\]
Note that $H_{n}^{+}(\mathbb{F})$ is a convex cone which is closed in the real
normed vector space $H_{n}(\mathbb{F)}$. The following result of Rothaus
\cite{Rothaus} will be one of the main tools in the proof of our first theorem.

\begin{proposition}
\label{Prop_Rothaus}Let $\mathcal{D}$ be the interior of a closed convex cone
$\mathcal{C}$ in a real normed vector space $\mathcal{V}$. Suppose
$\varphi:\mathcal{D}$ $\rightarrow\mathcal{D}$ is a bijective map where
\[
x\leq y\quad\text{if and only if}\quad\varphi(x)\leq\varphi(y)
\]
for every $x,y\in\mathcal{D}$. Then the map $\varphi$ is linear.
\end{proposition}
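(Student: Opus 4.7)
\emph{Plan.} The strategy proceeds in three stages: first show that $\varphi$ is a homeomorphism, then that it preserves affine midpoints, and finally that the resulting affine map has no translation part, hence is linear.

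For continuity, I would exploit the order-interval topology. Since $\mathcal{D}$ is the interior of $\mathcal{C}$, for any $x\in\mathcal{D}$ and any $\varepsilon>0$ there exist $a,b\in\mathcal{D}$ with $a<x<b$ (in the strict sense $x-a,\,b-x\in\mathcal{D}$) such that the order interval $[a,b]=\{z:\,a\leq z\leq b\}$ is contained in the $\varepsilon$-ball about $x$. Conversely, every such order interval contains a norm-neighborhood of each of its interior points. Thus the order intervals $[a,b]$ with $a,b\in\mathcal{D}$ form a neighborhood basis on $\mathcal{D}$. Because $\varphi$ is an order-isomorphism, $\varphi([a,b]\cap\mathcal{D})=[\varphi(a),\varphi(b)]\cap\mathcal{D}$, and the same for $\varphi^{-1}$, so both maps are continuous.

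The crucial step is preservation of midpoints: for $x,y\in\mathcal{D}$ one wants $\varphi\bigl(\tfrac{1}{2}(x+y)\bigr)=\tfrac{1}{2}(\varphi(x)+\varphi(y))$. The idea is to recover the additive structure of $\mathcal{V}$ from the order alone. For each $c\in\mathcal{C}$ the translation $\tau_{c}:w\mapsto w+c$ is an order-preserving injection of $\mathcal{D}$ into itself, and the collection $\{\tau_{c}:c\in\mathcal{C}\}$ can be singled out among order-endomorphisms of $\mathcal{D}$ by intrinsic properties (a commutative, freely-acting family whose orbits exhaust the ``order-rays'' emanating from each point). Conjugating by $\varphi$ produces a family with the same abstract properties on the image side, which must again coincide with the translations by cone elements. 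This forces $\varphi$ to respect addition up to a constant, in particular to preserve midpoints. Continuity from the first step then extends midpoint preservation through dyadic rationals to preservation of all convex combinations that remain in $\mathcal{D}$.

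Once convex combinations are preserved, a standard argument shows that $\varphi$ is the restriction to $\mathcal{D}$ of an affine map $x\mapsto Lx+v$ with $L$ a bounded invertible operator on $\mathcal{V}$. Order preservation forces $L(\mathcal{C})=\mathcal{C}$, and from $\varphi(\mathcal{D})=\mathcal{D}$ one deduces $\mathcal{D}+v=\mathcal{D}$; since $\mathcal{C}$ is salient by our blanket assumption $\mathcal{C}\cap(-\mathcal{C})=\{0\}$, no nonzero translation preserves $\mathcal{D}$, so $v=0$ and $\varphi$ is linear. The main obstacle is the middle step: the order $\leq$ on $\mathcal{D}$ is a priori much weaker than the vector-space structure, and distilling the additive operation from it requires the careful analysis that Rothaus carries out, resting ultimately on the fact that $\mathcal{C}$ induces a transitive commuting family of order-preserving translations on $\mathcal{D}$.
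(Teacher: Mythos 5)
The paper does not actually prove this proposition: it is imported verbatim from Rothaus's 1966 paper and used as a black box, so there is no internal argument to measure yours against. Judged on its own terms, your outline has sound but routine bookends --- continuity via order-interval neighborhoods (which, incidentally, also needs the cone to be normal for $[a,b]$ to be norm-small; automatic in the paper's finite-dimensional application), and the passage from affine to linear using $\mathcal{D}+v=\mathcal{D}$ together with $\mathcal{C}\cap(-\mathcal{C})=\{0\}$ --- while the entire substance of the theorem sits in the middle step, which you do not carry out and which, as formulated, cannot be carried out.

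Concretely, you assert that the translations $\tau_{c}:w\mapsto w+c$, $c\in\mathcal{C}$, are singled out among order-endomorphisms of $\mathcal{D}$ by being a commutative, freely acting family whose orbits exhaust the order-rays. That characterization is circular: conjugating $\{\tau_{c}\}$ by any order automorphism yields a family with exactly the same abstract properties, so to conclude that the conjugated family again consists of translations you would already need to know that every order automorphism is (affine-)linear, which is the theorem. Moreover, nonlinear order automorphisms genuinely exist for cones covered by the statement as written: on $\mathcal{C}=[0,\infty)\subset\mathbb{R}$ the map $x\mapsto x^{2}$ is an order automorphism of $\mathcal{D}=(0,\infty)$, and on the orthant $\mathbb{R}_{+}^{2}$ the map $(x,y)\mapsto(x^{2},y)$ is an order automorphism of the open orthant; neither is linear, and conjugating the translations by either produces a commutative, freely acting, orbit-exhausting family that is not the translation family. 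Hence no cone-independent argument of the kind you sketch can succeed; the result really does depend on structural features of the particular cone (which $H_{n}^{+}(\mathbb{R})$, $n\geq 2$, possesses and the half-line and orthant do not), and your outline never invokes any such feature. Your closing remark that the middle step ``requires the careful analysis that Rothaus carries out'' is an accurate admission that the proof has not been given.
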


We say that two Hermitian (symmetric) matrices $A,B\in M_{n}(\mathbb{F})$ are
adjacent if $\operatorname*{rank}(A-B)=1$. Huang and \v{S}emrl characterized
in \cite{Huang} maps $\varphi:$ $H_{n}(\mathbb{C)\rightarrow}H_{m}%
(\mathbb{C)}$, $m,n\in\mathbb{N}$, $n>1$, such that matrices $\varphi(A)$ and
$\varphi(B)$ are adjacent whenever $A$ and $B$ are adjacent, $A,B\in
H_{n}(\mathbb{C)}$. In \cite{Legisa1} Legi\v{s}a considered adjacency
preserving maps from $H_{n}(\mathbb{R)}$ to $H_{m}(\mathbb{R)}$ and proved the
following result.{}

\begin{proposition}
\label{Prop_Legisa}Let $n\geq2$ and let $\varphi:H_{n}(\mathbb{R)\rightarrow
}H_{m}(\mathbb{R)}$ be a map preserving adjacency, i.e. if $A,B\in
H_{n}(\mathbb{R)}$ and $\operatorname*{rank}(A-B)=1$, then
$\operatorname*{rank}(\varphi(A)-\varphi(B))=1$. Suppose $\varphi(0)=0$. Then either

\begin{itemize}
\item[(i)] there is a rank-one matrix $B\in H_{m}(\mathbb{R)}$ and a function
$f:H_{n}(\mathbb{R)\rightarrow R}$ such that for every $A\in H_{n}%
(\mathbb{R)}$
\[
\varphi(A)=f(A)B, or
\]

\item[(ii)] there exist $c\in\{-1,1\}$ and an invertible matrix $R\in
M_{m}(\mathbb{R)}$ such that for every $A\in H_{n}(\mathbb{R)}$
\[
\varphi(A)=cR\left[
\begin{array}
[c]{cc}%
A & 0\\
0 & 0
\end{array}
\right]  R^{t}.
\]
(Obviously, in this case $m\geq n.$ If $m=n$, the zeros on the right-hand side
of the formula are absent.)
\end{itemize}
\end{proposition}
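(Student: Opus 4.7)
The plan is to follow the classical Hua strategy for fundamental-theorem-of-geometry results, adapted to real symmetric matrices. First, I would observe that $\varphi$ maps rank-one symmetric matrices to rank-one symmetric matrices: since $\varphi(0)=0$ and the matrices adjacent to $0$ are precisely the rank-one ones, adjacency preservation forces the rank-one set into itself. Then I would analyze the image of a ``line'' $\ell_{x}=\{txx^{t}:t\in\mathbb{R}\}$ and of its translates $A+\ell_{x}$, which form maximal families of pairwise adjacent matrices sharing a common rank-one direction; such maximal families must also go to maximal families of pairwise adjacent matrices in $H_{m}(\mathbb{R})$.

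Next I would set up the dichotomy. If the images of all rank-one matrices lie in a single one-dimensional subspace $\mathbb{R}B$ with $B$ of rank one, an argument tracing adjacency chains shows that the whole map lands in $\mathbb{R}B$, producing case (i). Otherwise, there are rank-one matrices $x_{1}x_{1}^{t}$ and $x_{2}x_{2}^{t}$ with linearly independent images, and one proceeds to build a linear structure. I would pass to the projective level by sending each class $[x]\in\mathbb{P}(\mathbb{R}^{n})$ to the projective class of the rank-one image $\varphi(xx^{t})$ in $\mathbb{P}(\mathbb{R}^{m})$. The key geometric check is that adjacency relations among the $\varphi(xx^{t})$ translate into collinearity relations on $\mathbb{P}(\mathbb{R}^{m})$; in particular, whenever $x$, $y$, $z$ are coplanar in $\mathbb{R}^{n}$, the images of $xx^{t}$, $yy^{t}$, $zz^{t}$ must be mutually adjacent, forcing their projective classes to be collinear.

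At this point, for $n\geq 3$, the fundamental theorem of projective geometry produces a semilinear injection $T:\mathbb{R}^{n}\hookrightarrow\mathbb{R}^{m}$ inducing the projective map. Since the only field automorphism of $\mathbb{R}$ is the identity, $T$ is genuinely linear, so $\varphi(xx^{t})=\varepsilon(x)\,Tx(Tx)^{t}$ for some sign $\varepsilon(x)\in\{-1,1\}$. A connectedness argument on the cone of positive rank-one matrices forces $\varepsilon$ to be a global constant $c\in\{-1,1\}$. Completing $T$ to an invertible $R\in M_{m}(\mathbb{R})$ that embeds $\mathbb{R}^{n}$ as the first $n$ coordinates of $\mathbb{R}^{m}$ yields the formula of case (ii) on rank-one matrices. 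Finally, additivity is read off from the combinatorics of ``adjacency parallelograms'' $A$, $A+P$, $A+P+Q$, $A+Q$ with $P$, $Q$ rank one and non-collinear, extending the formula from rank-one matrices to all of $H_{n}(\mathbb{R})$.

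The main obstacle, I expect, will be the case $n=2$: the projective line $\mathbb{P}(\mathbb{R}^{2})$ carries no nontrivial incidence structure, so the fundamental theorem of projective geometry cannot be invoked directly, and one has to exploit finer adjacency information (for instance, how triples of parallel lines intersect inside $H_{2}(\mathbb{R})$, and how positive versus negative rank-one matrices interact) to pin down the linear $T$. The second delicate issue is separating case (i) from case (ii) cleanly using only adjacency preservation together with $\varphi(0)=0$, since without injectivity the map can a priori collapse several parallel lines of mutually adjacent matrices onto a single line without immediately being of the degenerate form.
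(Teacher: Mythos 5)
The paper does not prove this proposition at all: it is quoted from Legi\v{s}a \cite{Legisa1} (which in turn builds on the Huang--\v{S}emrl classification of adjacency preservers on Hermitian matrices \cite{Huang}), so there is no internal proof to compare against, and what you are attempting is a re-proof of a known, full-paper-length theorem. As a sketch it contains a concrete false step: you claim that whenever $x,y,z$ are coplanar the matrices $xx^{t}$, $yy^{t}$, $zz^{t}$ are mutually adjacent. This is wrong. For linearly independent $x,y$ the quadratic form of $xx^{t}-yy^{t}$ is $v\mapsto\langle x,v\rangle^{2}-\langle y,v\rangle^{2}$, a difference of squares of two independent linear functionals, hence of signature $(1,1)$ and rank $2$; the same happens for any signs $\varepsilon_{1}xx^{t}-\varepsilon_{2}yy^{t}$. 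So two rank-one symmetric matrices built from $x$ and $y$ are adjacent only when $x$ and $y$ are parallel, adjacency among the rank-one images encodes parallelism rather than collinearity, and the incidence structure you want to feed into the fundamental theorem of projective geometry cannot be read off the way you describe. One must instead work with the geometry of maximal pairwise-adjacent sets (the lines $A+\mathbb{R}\,xx^{t}$) and with matrices of higher rank, which is exactly where the real work in \cite{Huang} and \cite{Legisa1} lies.

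Beyond that, the two issues you flag as ``delicate'' are not side remarks but essential missing pieces. The proposition is asserted for $n\geq2$, and the case $n=2$, where no projective-geometric argument is available, needs a separate substantial treatment. The separation of the degenerate case (i) from case (ii) must be carried out for a map that is assumed neither injective, nor surjective, nor continuous, and this is precisely why the degenerate alternative appears in the statement; your ``adjacency parallelogram'' extension from rank-one matrices to all of $H_{n}(\mathbb{R})$ is likewise unjustified without injectivity. In short, the proposal is a plan with a broken central step rather than a proof; if you need this statement, cite \cite{Legisa1} as the paper does.
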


We will conclude this section with an auxiliary result. Note first that for
$A,B\in H_{n}(\mathbb{F)}$, $B\leq^{L}A$ implies $\operatorname{Im}%
B\subseteq\operatorname{Im}A$ (see e.g. \cite[Corollary 8.2.12]{MitraKnjiga}).

\begin{lemma}
\label{Lemma_prelim}Let $A,B\in H_{n}^{+}(\mathbb{F})$ and let
$\operatorname*{rank}(A)=1$. If $B\leq^{L}A$, then $B=\lambda A$ for some
scalar $\lambda\in\left[  0,1\right]  $.
\end{lemma}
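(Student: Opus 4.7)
The plan is to exploit the fact recalled just before the lemma statement, namely that $B \leq^L A$ forces $\operatorname{Im} B \subseteq \operatorname{Im} A$, together with the rank-one PSD structure of $A$.

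First I would write $A$ in its rank-one PSD form $A = xx^{\ast}$ for some nonzero $x \in \mathbb{F}^n$, so that $\operatorname{Im} A = \operatorname{span}\{x\}$. By the cited consequence of $B \leq^{L} A$, we have $\operatorname{Im} B \subseteq \operatorname{span}\{x\}$, which forces $\operatorname{rank}(B) \leq 1$. Since $B$ is positive semidefinite, its spectral decomposition reduces to at most a single term $B = \sigma v v^{\ast}$ with $\sigma \geq 0$ and (when $\sigma > 0$) $v \in \operatorname{Im} B \subseteq \operatorname{span}\{x\}$; writing $v = cx$ and absorbing $|c|^{2}$ into the scalar yields $B = \mu x x^{\ast} = \mu A$ for some $\mu \geq 0$ (with $\mu = 0$ if $B = 0$).

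Finally, I would plug this back into the Löwner condition: $A - B = (1-\mu)xx^{\ast}$ must be positive semidefinite, and since $xx^{\ast}$ itself is a nonzero PSD matrix, this forces $1 - \mu \geq 0$, i.e.\ $\mu \leq 1$. Setting $\lambda = \mu$ gives the desired conclusion $B = \lambda A$ with $\lambda \in [0,1]$.

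There is no real obstacle here; the only point requiring a bit of care is the step from ``$B$ is PSD and $\operatorname{Im} B$ lies in a one-dimensional subspace'' to ``$B$ is a nonnegative multiple of $xx^{\ast}$,'' which is immediate once one invokes the spectral theorem for Hermitian (symmetric in the real case) matrices. The argument is identical in the real and complex settings.
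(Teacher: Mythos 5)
Your proof is correct and follows essentially the same route as the paper's: both deduce $\operatorname{rank}(B)\leq 1$ from the image inclusion forced by $B\leq^{L}A$, apply the spectral theorem to identify $B$ as a nonnegative multiple of $A$ (the paper writes $A=\alpha P$ with $P$ a rank-one projection where you write $A=xx^{\ast}$, a purely cosmetic difference), and then read off $\lambda\in[0,1]$ from positive semidefiniteness of $A-B$. No issues.
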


\begin{proof}
Since $A$ is of rank-one and $A\in H_{n}^{+}(\mathbb{F})$, it follows by the
spectral theorem \cite[page 46]{Conway} that $A=\alpha P$ where $\alpha>0$ and
$P\in P_{n}(\mathbb{F})$ with $\operatorname*{rank}(P)=1$. Let $B\leq^{L}A$
for some $B\in H_{n}^{+}(\mathbb{F})$. Then $\operatorname{Im}B\subseteq
\operatorname{Im}A$ and thus $\operatorname*{rank}(B)\leq1$. Again, by the
spectral theorem $B=\beta Q$ for some $\beta\geq0$ and a rank-one $Q\in
P_{n}(\mathbb{F})$. If $\beta=0$, then $B=0$ and thus $B=\lambda A$ for
$\lambda=0$. Suppose $\beta\neq0$. Since $\operatorname{Im}B\subseteq
\operatorname{Im}A$, we have $\operatorname{Im}Q=\operatorname{Im}P$ and thus
(since $P$ and $Q$ are orthogonal projection matrices) $P=Q$. Let
$\lambda=\frac{\beta}{\alpha}$. We have
\[
\lambda A=\frac{\beta}{\alpha}\alpha P=\beta P=B.
\]
Moreover, from $B\leq^{L}A$ it clearly follows that $\lambda\in\left[
0,1\right]  $.
\end{proof}

\section{Preservers of the L{\"{o}}wner partial order}

Let $S\in M_{n}(\mathbb{R})$ be an invertible matrix and $A,B,C\in
H_{n}(\mathbb{R)}$. It is easy to see (\cite[Theorem 8.2.7, Remark
8.2.8]{MitraKnjiga}) that then
\begin{equation}
A\leq^{L}B\quad\text{if and only if}\quad SAS^{t}\leq^{L}SBS^{t}.
\label{eq_S_invertible}%
\end{equation}
Also, if $A\leq^{L}B$, then $A+C\leq^{L}B+C$ and $\lambda A\leq^{L}\lambda B$
for every $\lambda\geq0$. Let us now state and prove our main result. The
proof will follow some ideas from \cite[the proof of Theorem 1]{Molnar1}
however for the sake of completeness and since we are dealing here with real
matrices, we will not skip the details and will
present it in its entirety.

\begin{theorem}
\label{Theorem_main:_Loewner}Let $n\geq2$ be an integer. Then $\varphi
:H_{n}^{+}(\mathbb{R})\rightarrow H_{n}^{+}(\mathbb{R})$ is a surjective map
that preserves the L{\"{o}}wner order $\leq^{L}$ in both directions if and
only if there exists an invertible matrix $S\in M_{n}(\mathbb{R)}$ such that
\[
\varphi(A)=SAS^{t}%
\]
for every $A\in H_{n}^{+}(\mathbb{R})$.
\end{theorem}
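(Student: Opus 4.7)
The plan is to follow the Molnár strategy, adapted to real symmetric matrices, in roughly four moves.

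First, I would establish the easy structural facts. The implication $\Leftarrow$ is immediate from (\ref{eq_S_invertible}) together with the fact that $SAS^{t}\in H_{n}^{+}(\mathbb{R})$ whenever $A\in H_{n}^{+}(\mathbb{R})$. For $\Rightarrow$, note that $\varphi$ is automatically injective (if $\varphi(A)=\varphi(B)$ then $\varphi(A)\leq^{L}\varphi(B)$ and $\varphi(B)\leq^{L}\varphi(A)$, hence $A=B$), so $\varphi$ is a bijection. Since $0$ is the unique minimum of $\leq^{L}$ on $H_{n}^{+}(\mathbb{R})$, we get $\varphi(0)=0$.

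Second, I would invoke Proposition \ref{Prop_Rothaus}. The interior $\mathcal{D}$ of the cone $H_{n}^{+}(\mathbb{R})$ (inside the real normed space $H_{n}(\mathbb{R})$) consists of the positive definite matrices, and this interior is characterized order-theoretically: $A\in\mathcal{D}$ iff for every $B\in H_{n}^{+}(\mathbb{R})$ there exists $\lambda>0$ with $B\leq^{L}\lambda A$. Because $\varphi$ preserves $\leq^{L}$ in both directions and is a bijection that sends $0\mapsto 0$, this characterization is preserved, so $\varphi$ restricts to a bijection $\mathcal{D}\to\mathcal{D}$. Proposition \ref{Prop_Rothaus} then yields that $\varphi|_{\mathcal{D}}$ is linear.

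Third, I would extend $\varphi$ to a linear bijection $\Phi:H_{n}(\mathbb{R})\to H_{n}(\mathbb{R})$. Every $A\in H_{n}^{+}(\mathbb{R})$ is a limit of positive definite matrices $A+\tfrac{1}{k}I$, so continuity (or a direct density argument) extends additivity and positive-homogeneity to all of $H_{n}^{+}(\mathbb{R})$. Since every symmetric matrix is a difference of two positive definite matrices (e.g.\ $A=(A+cI)-cI$ for large $c$), we can define $\Phi(A-B)=\varphi(A)-\varphi(B)$ for $A,B\in H_{n}^{+}(\mathbb{R})$ and verify this is a well-defined linear bijection on $H_{n}(\mathbb{R})$ that maps $H_{n}^{+}(\mathbb{R})$ onto itself.

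Finally, I would reduce to Proposition \ref{Prop_Legisa}. The key observation is that $\varphi$ sends rank-one elements of $H_{n}^{+}(\mathbb{R})$ to rank-one elements, because Lemma \ref{Lemma_prelim} characterizes these exactly as those $A\in H_{n}^{+}(\mathbb{R})$ for which the order interval $[0,A]$ is totally ordered, a purely order-theoretic property. By linearity and because $\Phi(-H_{n}^{+})=-H_{n}^{+}$, every rank-one symmetric matrix (each is either positive or negative semidefinite of rank $1$) is sent by $\Phi$ to a rank-one symmetric matrix. Hence for adjacent $A,B\in H_{n}(\mathbb{R})$ we have $\mathrm{rank}\,\Phi(A-B)=1$, i.e.\ $\Phi$ preserves adjacency. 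Since $\Phi(0)=0$ and $\Phi$ is bijective, case (i) of Proposition \ref{Prop_Legisa} is excluded (its image lies in a rank-one line), leaving $\Phi(A)=cRAR^{t}$ with $R\in M_{n}(\mathbb{R})$ invertible and $c\in\{-1,1\}$; the requirement $\Phi(H_{n}^{+})=H_{n}^{+}$ forces $c=1$, and setting $S=R$ finishes the proof. The main obstacle I anticipate is confirming the order-theoretic characterization of the interior cleanly enough for Rothaus to apply, and verifying that the rank-one order-theoretic description from Lemma \ref{Lemma_prelim} survives the passage from $\varphi$ on the cone to $\Phi$ on the whole space.
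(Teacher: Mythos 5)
Your overall route coincides with the paper's: reduce to Proposition \ref{Prop_Rothaus} on the interior, extend to a linear bijection of $H_{n}(\mathbb{R})$, and finish with the adjacency-preserver result (Proposition \ref{Prop_Legisa}), excluding the degenerate case and the sign $c=-1$ much as the paper does. Two steps, however, have genuine gaps as written. First, your characterization of the interior, ``$A$ is positive definite iff for every $B$ there is $\lambda>0$ with $B\leq^{L}\lambda A$,'' is correct as a statement but is not purely order-theoretic: it involves the scalar multiple $\lambda A$, and at this stage nothing is known about $\varphi(\lambda A)$ versus $\lambda\varphi(A)$. So the inference ``$\varphi$ preserves $\leq^{L}$ in both directions, hence preserves this characterization'' does not go through. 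The fix is essentially what the paper does: first prove, independently of everything else, that $\varphi$ preserves rank-one matrices, using Lemma \ref{Lemma_prelim} to characterize them as those $A$ for which $[0,A]$ is totally ordered; then use surjectivity onto rank-one matrices to show that $\varphi(\varepsilon I)$, and hence $\varphi(T)$ for every positive definite $T$, is invertible. (Equivalently, you could make your characterization order-theoretic by noting that two rank-one positive semidefinite matrices have the same image iff their order intervals meet in a nonzero element, and saying that $A$ is positive definite iff every rank-one element has a nonzero ``companion'' with the same image inside $[0,A]$.) In any case the rank-one step must come before the appeal to Rothaus, not after it.

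Second, the passage from linearity on the interior to additivity on all of $H_{n}^{+}(\mathbb{R})$ cannot be done ``by continuity'': $\varphi$ is not assumed continuous, and continuity is not yet a consequence of order preservation. The paper's substitute is an order-convergence argument: $A_{k}=A+\frac{1}{k}I$ is monotone decreasing with $\inf_{k}A_{k}=A$, order preservation gives $\varphi(A)=\inf_{k}\varphi(A_{k})$, and a monotone decreasing sequence in $H_{n}^{+}(\mathbb{R})$ bounded below converges to its infimum, so additivity and positive homogeneity pass to the limit. Your parenthetical ``direct density argument'' needs to be exactly this, spelled out. With those two repairs your proof is correct and is essentially the paper's; the only real variants are your definition of the extension by $\Phi(A-B)=\varphi(A)-\varphi(B)$ on differences of positive semidefinite matrices rather than via positive and negative parts (equivalent, and well defined by additivity), and your exclusion of $c=-1$ via $\Phi(H_{n}^{+})=H_{n}^{+}$ rather than via order preservation (both work).
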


\begin{proof}
If $\varphi:H_{n}^{+}(\mathbb{R})\rightarrow H_{n}^{+}(\mathbb{R})$ is of the
form $\varphi(A)=SAS^{t}$, $A\in H_{n}^{+}(\mathbb{R})$, where $S\in
M_{n}(\mathbb{R)}$ is invertible, than it preserves by (\ref{eq_S_invertible})
the order $\leq^{L}$ in both directions and is clearly surjective.

Conversely, let $\varphi:H_{n}^{+}(\mathbb{R})\rightarrow H_{n}^{+}%
(\mathbb{R})$ be a surjective map that preserves the L{\"{o}}wner order
$\leq^{L}$ in both directions. We will split the proof into several steps.

1. $\varphi$ \textit{is bijective. }Let $\varphi(A)=\varphi(B)$ for $A,B\in
H_{n}^{+}(\mathbb{R})$. The order $\leq^{L}$ is reflexive so $\varphi
(A)\leq^{L}\varphi(B)$ and $\varphi(B)\leq^{L}\varphi(A)$. Since $\varphi$
preserves the order $\leq^{L}$ in both directions, we have $A\leq^{L}B$ and
$B\leq^{L}A$. It follows that $A=B$, since $\leq^{L}$ is antisymmetric. Thus,
$\varphi$ is injective and therefore bijective.

2. $\varphi(0)=0$. Note that $0\leq^{L}A$ for every $A\in H_{n}^{+}%
(\mathbb{R})$. So, on the one hand $0\leq^{L}\varphi(0)$ and on the other
hand, since $\varphi^{-1}$ has the same properties as $\varphi$, $0\leq
^{L}\varphi^{-1}(0)$ and thus $\varphi(0)\leq^{L}0$.

3. $\varphi$ \textit{preserves the set of all matrices of rank-one.} Let us
first show that $A\in H_{n}^{+}(\mathbb{R})$ is of rank-one if and only if for
every $B,C\in\left\{  D\in H_{n}^{+}(\mathbb{R}):D\leq^{L}A\right\}
\equiv\left[  0,A\right]  $ we have $B\leq^{L}C$ or $C\leq^{L}B,$ i.e. the
order $\leq^{L}$ is linear on $\left[  0,A\right]  $.

Let $A\in H_{n}^{+}(\mathbb{R})$ be of rank-one and suppose first
$B,C\in\left[  0,A\right]  $. By Lemma \ref{Lemma_prelim} we have $B=\lambda
A$ and $C=\mu A$ for some $\lambda,\mu\in\left[  0,1\right]  $. If $\lambda=0$
or $\mu=0$, then clearly $B\leq^{L}C$ or $C\leq^{L}B$. Suppose $\lambda\neq0$
and $\mu\neq0$. It follows that $\mu B=\lambda C$ and thus
\[
B-C=\left(  1-\frac{\mu}{\lambda}\right)  B.
\]
Clearly, then $0\leq^{L}B-C$ or $0\leq^{L}C-B$, i.e. $C\leq^{L}B$ or
$B\leq^{L}C$.

Conversely, suppose that the order $\leq^{L}$ is linear on $\left[
0,A\right]  $ and assume that $\operatorname*{rank}(A)>1$. By the spectral
theorem there exist $P_{1},P_{2}\in P_{n}(\mathbb{R})$ of rank-one with
$\operatorname{Im}P_{1}\cap\operatorname{Im}P_{2}=\left\{  0\right\}  $, and
$\lambda_{1},\lambda_{2}\in\left(  0,\infty\right)  $, such that $\lambda
_{1}P_{1}\leq^{L}A$ and $\lambda_{2}P_{2}\leq^{L}A$, i.e. $\lambda_{1}%
P_{1},\lambda_{2}P_{2}\in\left[  0,A\right]  .$ This yields by assumption
$\lambda_{1}P_{1}\leq^{L}\lambda_{2}P_{2}$ or $\lambda_{2}P_{2}$ $\leq
^{L}\lambda_{1}P_{1}$ and therefore in either case $\operatorname{Im}%
P_{1}=\operatorname{Im}P_{2}$, a contradiction.

Since $\varphi$ preserves the order $\leq^{L}$ in both directions, $\left[
0,A\right]  $ is linearly ordered if and only if $\left[  0,\varphi(A)\right]
$ is linearly ordered. Thus, $A\in H_{n}^{+}(\mathbb{R})$ is of rank-one if
and only if $\varphi(A)$ is of rank-one.

4. $\varphi$ \textit{preserves the set of all invertible (i.e. positive
definite) matrices. }For every matrix $P\in P_{n}(\mathbb{R})$ of rank $r$
there exists an orthogonal matrix $Q\in M_{n}(\mathbb{R})$ such that
\[
P=Q\left[
\begin{array}
[c]{cc}%
I_{r} & 0\\
0 & 0
\end{array}
\right]  Q^{t}%
\]
where $I_{r}$ is the $r\times r$ identity matrix. Let $I$ denote the identity
matrix in $M_{n}(\mathbb{R})$. Since then
\[
I-P=Q\left[
\begin{array}
[c]{cc}%
0 & 0\\
0 & I_{n-r}%
\end{array}
\right]  Q^{t}%
\]
it follows by the definition (\ref{Def_Lowner}) that $P\leq^{L}I$ for every
matrix $P\in P_{n}(\mathbb{R})$. This implies, $\epsilon P\leq^{L}\epsilon I$
for every $\varepsilon\geq0$. Let $\varepsilon>0$ be arbitrary but fixed. Let
us show that then $\varphi(\varepsilon I)$ is invertible. By the transitivity
of $\leq^{L}$, $\alpha P\leq^{L}\varepsilon I$ for every $P\in
P_{n}(\mathbb{R})$ and any scalar $\alpha$ where $0\leq\alpha\leq\varepsilon$.
Suppose $\varphi(\varepsilon I)$ is not invertible. Then there exists a
rank-one $Q\in P_{n}(\mathbb{R})$ such that $\operatorname{Im}Q\nsubseteq
\operatorname{Im}\varphi(\varepsilon I)$. Since $\varphi$ is surjective and
sends rank-one matrices to rank-one matrices, there exists a rank-one $P\in
P_{n}(\mathbb{R})$ and $\alpha>0$ such that $\varphi(\alpha P)=Q$. Here
$\alpha>\varepsilon$ since $\varphi$ preserves the order in both directions.
From $\varepsilon P\leq^{L}\alpha P$ we have $\varphi(\varepsilon P)\leq
^{L}\varphi(\alpha P)=Q$. Both $\varepsilon P$ and $Q$ are of rank-one and
therefore $\operatorname{Im}\varphi(\varepsilon P)=\operatorname{Im}Q$. This
is a contradiction since $\varphi(\varepsilon P)\leq^{L}\varphi(\varepsilon
I)$ and therefore $\operatorname{Im}\varphi(\varepsilon P)\subseteq
\operatorname{Im}\varphi(\varepsilon I)$. So, $\varphi(\varepsilon I)$ is
invertible for any $\varepsilon>0$.

Let now $T\in H_{n}^{+}(\mathbb{R})$ be an invertible (i.e. positive definite)
matrix. By \cite[page 93]{Pedersen} there exists $\varepsilon>0$ such that
$\varepsilon I\leq^{L}T$. It follows that $\varphi(\varepsilon I)\leq
^{L}\varphi(T)$ and thus $\mathbb{R}^{n}=\operatorname{Im}\varphi(\varepsilon
I)\subseteq\operatorname{Im}\varphi(T)$. So, $\varphi(T)$ is invertible. Since
$\varphi^{-1}$ has the same properties as $\varphi$, we may conclude that
$T\in H_{n}^{+}(\mathbb{R})$ is invertible if and only if $\varphi(T)$ is invertible.

5. $\varphi$ \textit{is linear on the set of all invertible matrices in
}$H_{n}^{+}(\mathbb{R})$. The interior of the set $H_{n}^{+}(\mathbb{R})$ of
all positive semidefinite matrices is the set of all invertible (i.e. positive
definite) matrices in $H_{n}^{+}(\mathbb{R})$ (see \cite[page 239]{Klerk}).
Since $H_{n}^{+}(\mathbb{R})$ is a convex cone which is closed in the real
normed vector space $H_{n}(\mathbb{R)}$ and since $\varphi$ preserves the set
of all invertible matrices, we may conclude by Proposition \ref{Prop_Rothaus}
that $\varphi$ is linear (additive and positive homogenous) on the set of all
invertible matrices in $H_{n}^{+}(\mathbb{R})$.

6. $\varphi$ \textit{is a linear map. }Let $A,B\in H_{n}^{+}(\mathbb{R})$ and
let $A_{k}=A+\frac{1}{k}I$, $B_{k}=B+\frac{1}{k}I$, $k\in\mathbb{N}$. Then
$\left\{  A_{k}\right\}  $ and $\left\{  B_{k}\right\}  $ are sequences of
positive definite (invertible) matrices in $H_{n}^{+}(\mathbb{R})$. Observe
that both sequences are monotone decreasing with respect to $\leq^{L}$ and
note that the sequence $\left\{  A_{k}\right\}  $ converges to $A$ and the
sequence $\left\{  B_{k}\right\}  $ converges to $B$ in the strong operator
topology. Also, $\inf_{k}A_{k}=A$ and $\inf_{k}B_{k}=B$ where $\inf$ denotes
the infimum of a sequence. We have $A+B=\inf_{k}(A_{k}+B_{k})$. Since
$\varphi$ preserves the order, it follows that $\varphi(A)=\inf_{k}%
\varphi(A_{k})$, $\varphi(B)=\inf_{k}\varphi(B_{k})$, and $\varphi
(A+B)=\inf_{k}\varphi(A_{k}+B_{k})$. Therefore, $\left\{  \varphi
(A_{k})\right\}  $, $\left\{  \varphi(B_{k})\right\}  $, and $\left\{
\varphi(A_{k}+B_{k})\right\}  $ are monotone decreasing sequences bounded from
below. By \cite[Definition 2.8 and Example 2.10]{Burbanks} (see also
\cite[page 263]{Riesz}) there exist limits (in the strong sense) of these
sequences that equal their infima. Thus,
\[
\varphi(A)=\underset{k\rightarrow\infty}{\lim}\varphi(A_{k}),\quad
\varphi(B)=\underset{k\rightarrow\infty}{\lim}\varphi(B_{k}),\quad
\varphi(A+B)=\underset{k\rightarrow\infty}{\lim}\varphi(A_{k}+B_{k}).
\]
Step 5 yields that $\varphi(A_{k}+B_{k})=\varphi(A_{k}%
)+\varphi(B_{k})$ and hence
\[
\varphi(A+B)=\underset{k\rightarrow\infty}{\lim}\varphi(A_{k}+B_{k}%
)=\underset{k\rightarrow\infty}{\lim}\varphi(A_{k})+\underset{k\rightarrow
\infty}{\lim}\varphi(B_{k})=\varphi(A)+\varphi(B),
\]
i.e. $\varphi$ is additive. To show that $\varphi$ is also (positive)
homogenous, let $\lambda\geq0$ be any scalar. Clearly, $\lambda A=\inf
_{k}(\lambda A_{k})$. Again, by the previous step it follows that
\[
\varphi(\lambda A)=\underset{k\rightarrow\infty}{\lim}\varphi(\lambda
A_{k})=\lambda\underset{k\rightarrow\infty}{\lim}\varphi(A_{k})=\lambda
\varphi(A).
\]

7. \textit{We will extend the map }$\varphi$ \textit{from} $H_{n}%
^{+}(\mathbb{R})$ \textit{to} $H_{n}(\mathbb{R)}$. Let $A\in H_{n}%
(\mathbb{R)}$. There exists an orthogonal matrix $Q\in M_{n}(\mathbb{R})$ such
that $A=Q^{t}DQ$ where $D$ is a diagonal matrix having the eigenvalues of $A$
on the diagonal, i.e. $D=$diag$\left(  \lambda_{i}:1\leq i\leq n\right)  $.
Let $D^{+}=$diag$\left(  \lambda_{i}^{+}:1\leq i\leq n\right)  $ and $D^{-}%
=$diag$\left(  \lambda_{i}^{-}:1\leq i\leq n\right)  $ where $\lambda_{i}%
^{+}=\max\left\{  \lambda_{i},0\right\}  $ and $\lambda_{i}^{-}=\max\left\{
-\lambda_{i},0\right\}  $. Clearly, then $A=Q^{t}D^{+}Q-Q^{t}D^{-}Q$. Note
that both $Q^{t}D^{+}Q,Q^{t}D^{-}Q\in H_{n}^{+}(\mathbb{R})$. We call the
matrices $Q^{t}D^{+}Q$ and $Q^{t}D^{-}Q$ the positive and the negative part of
$A$, respectively. We may now extend the map $\varphi$ to the map
$\widehat{\varphi}:H_{n}(\mathbb{R)\rightarrow}H_{n}(\mathbb{R)}$ in the
following way:%
\[
\widehat{\varphi}(C)=\varphi(C^{+})-\varphi(C^{-}),\quad C\in H_{n}%
(\mathbb{R)},
\]
where $C^{+}$ and $C^{-}$ are the positive and the negative part of $C$,
respectively. Recall that $\varphi(0)=0$. Take $C\in H_{n}^{+}(\mathbb{R})$
and note that then $C^{+}=C$ and $C^{-}=0$. So, $\widehat{\varphi}%
(C)=\varphi(C)-\varphi(0)=\varphi(C)$.

8. $\widehat{\varphi}$ \textit{is a linear map. }Let $A,B\in H_{n}%
^{+}(\mathbb{R})$ and $C=A-B$. So, $C\in H_{n}(\mathbb{R})$. From $C^{+}%
-C^{-}=C=A-B$, we have $C^{+}+B=A+C^{-}\in H_{n}^{+}(\mathbb{R})$. Recall that
$\varphi$ is additive hence $\varphi(C^{+})+\varphi(B)=\varphi(A)+\varphi
(C^{-})$ and thus
\begin{equation}
\widehat{\varphi}(A-B)=\widehat{\varphi}(C)=\varphi(C^{+})-\varphi
(C^{-})=\varphi(A)-\varphi(B). \label{eq_proof_linear}%
\end{equation}
Let us show that $\widehat{\varphi}$ is additive. Let $C,D\in$ $H_{n}%
(\mathbb{R})$. Then by (\ref{eq_proof_linear})
\begin{align*}
\widehat{\varphi}(C+D)  &  =\widehat{\varphi}(C^{+}-C^{-}+D^{+}-D^{-}%
)=\widehat{\varphi}((C^{+}+D^{+})-(C^{-}+D^{-}))\\
&  =\varphi(C^{+}+D^{+})-\varphi(C^{-}+D^{-})=\varphi(C^{+})-\varphi
(C^{-})+\varphi(D^{+})-\varphi(D^{-})\\
&  =\widehat{\varphi}(C)+\widehat{\varphi}(D).
\end{align*}

Let us now prove that $\widehat{\varphi}$ is homogenous. Let $C\in$
$H_{n}(\mathbb{R})$ and let $\lambda\in\mathbb{R}$. Suppose first $\lambda
\geq0$. Then $(\lambda C)^{+}=\lambda C^{+}$ and $(\lambda C)^{-}=\lambda
C^{-}$ are the positive and the negative part of $\lambda C$, respectively.
Since $\varphi$ is (positive) homogenous, we have
\[
\widehat{\varphi}(\lambda C)=\varphi(\lambda C^{+})-\varphi(\lambda
C^{-})=\lambda\varphi(C^{+})-\lambda\varphi(C^{-})=\lambda\widehat{\varphi
}(C).
\]
Let now $\lambda<0$. Then $(\lambda C)^{+}=-\lambda C^{-}$ and $(\lambda
C)^{-}=-\lambda C^{+}$. So, $\widehat{\varphi}(\lambda C)=\widehat{\varphi
}(-\lambda C^{-}-(-\lambda C^{+}))$ and therefore
by (\ref{eq_proof_linear})
\[
\widehat{\varphi}(\lambda C)=\varphi(-\lambda C^{-})-\varphi(-\lambda
C^{+})=-\lambda\varphi(C^{-})-(-\lambda)\varphi(C^{+})=\lambda(\varphi
(C^{+})-\varphi(C^{-}))=\lambda\widehat{\varphi}(C).
\]

9. $\widehat{\varphi}$ \textit{preserves the order }$\leq^{L}$ \textit{in both
directions. }Since $\widehat{\varphi}(C)=\varphi(C)$ for every $C\in H_{n}%
^{+}(\mathbb{R})$, we observe that $0\leq^{L}C$ if and only if $0\leq^{L}$
$\widehat{\varphi}(C)$. Let $C_{1},C_{2}\in H_{n}(\mathbb{R})$. Then
$C_{1}\leq^{L}C_{2}$ if and only if $0\leq^{L}$ $\widehat{\varphi}(C_{2}%
-C_{1})$. Since $\widehat{\varphi}$ is linear, this equivalent to
$\widehat{\varphi}(C_{1})\leq^{L}\widehat{\varphi}(C_{2})$.

10. $\widehat{\varphi}$ \textit{is bijective. }Since $\widehat{\varphi}$
preserves the order $\leq^{L}$ in both directions, it is clearly
injective (see the first step). To show that $\widehat{\varphi}$ is
surjective, let $C\in H_{n}(\mathbb{R})$. Then we may write $C=C^{+}-C^{-}$
where $C^{+},C^{-}\in H_{n}^{+}(\mathbb{R})$. Since $\varphi$ is surjective,
there exist $A,B\in H_{n}^{+}(\mathbb{R})$ such that $C^{+}=\varphi
(A)=\widehat{\varphi}(A)$ and $C^{-}=\varphi(B)=\widehat{\varphi}(B)$. So,
\[
C=C^{+}-C^{-}=\widehat{\varphi}(A)-\widehat{\varphi}(B)=\widehat{\varphi
}(A-B),
\]
i.e. $\widehat{\varphi}$ is surjective.

11. $\widehat{\varphi}$ \textit{is an adjacency preserving map}. Let us first
show that $\widehat{\varphi}$ preserves the set of all rank-one matrices. Let
$C\in H_{n}(\mathbb{R})$ be a rank-one matrix. By the spectral theorem,
$C=\alpha P$ where $\alpha\in\mathbb{R}$ is nonzero and $P\in P_{n}%
(\mathbb{R})$ is of rank-one. Since $\widehat{\varphi}$ is linear and since
$P\in H_{n}^{+}(\mathbb{R})$, we have
\[
\widehat{\varphi}(C)=\alpha\widehat{\varphi}(P)=\alpha\varphi(P).
\]
Recall that $\varphi$ preserves the set of rank-one matrices. It follows that
$\widehat{\varphi}(C)$ is of rank-one. Let now $A,B\in H_{n}(\mathbb{R})$ with
$\operatorname*{rank}(A-B)=1$, i.e. let $A$ and $B$ be adjacent. It follows
that $\widehat{\varphi}(A-B)$ is of rank-one. Since $\widehat{\varphi
}(A-B)=\widehat{\varphi}(A)-\widehat{\varphi}(B)$, we may conclude that
$\widehat{\varphi}(A)$ and $\widehat{\varphi}(B)$ are adjacent.

We are now in the position to conclude the proof of the theorem. Since
$\widehat{\varphi}:$ $H_{n}(\mathbb{R)\rightarrow}H_{n}(\mathbb{R)}$ is a
bijective map that preserves adjacency, it follows by Proposition
\ref{Prop_Legisa} that there exists $c\in\left\{  -1,1\right\}  $ and an
invertible $S\in M_{n}(\mathbb{R})$ such that
\[
\widehat{\varphi}(A)=cSAS^{t},\quad A\in H_{n}(\mathbb{R)}.
\]
Let $A,B\in H_{n}(\mathbb{R)}$, $A\neq B$, and $A\leq^{L}B$. Then on the one
hand by (\ref{eq_S_invertible}), $SAS^{t}\leq^{L}SBS^{t}$. If $c=-1$, we get
on the one hand, since $\widehat{\varphi}$ preserves the order $\leq^{L}$,
$-SAS^{t}\leq^{L}-SBS^{t}$. It follows that $SAS^{t}=SBS^{t}$ and therefore
$A=B$, a contradiction. To conclude, $\widehat{\varphi}(A)=SAS^{t}$ for every
$A\in H_{n}(\mathbb{R)}$ and therefore $\varphi(A)=SAS^{t}$ for every $A\in
H_{n}^{+}(\mathbb{R})$.
\end{proof}

\begin{remark}
The proof of Theorem \ref{Theorem_main:_Loewner} may serve with a few
adjustments (e.g. instead of Proposition \ref{Prop_Legisa} we may use Theorem
1.2 from \cite{Huang} (see also \cite{Hua1,Hua2})) as an alternative proof of
finite-dimensional (complex) version (dim$\mathcal{H}<\infty$) of Moln\'{a}r's
result (\ref{Molnar_result}).
\end{remark}

\begin{remark}
Let us present an observation about preservers of the L{\"{o}}wner partial
order and linear models. Let $L_{1}=$ $(y_{1},X_{1}\beta,\sigma^{2}D_{1})$ and
$L_{2}=$ $(y_{2},X_{2}\beta,\sigma^{2}D_{2})$ be two linear models. Here
$X_{1}\in M_{n,p}(\mathbb{R})$, $X_{2}\in M_{m,p}(\mathbb{R})$, $D_{1}\in
H_{n}^{+}(\mathbb{R)}$, and $D_{2}\in H_{m}^{+}(\mathbb{R)}$. We say (see
\cite{Stepniak2}) that $L_{1}$ is at least as good as $L_{2}$ if for any
unbiased estimator $a_{2}^{t}y_{2}$, $a_{2}\in M_{m,1}(\mathbb{R})$, of a
parameter $k^{t}\beta$, $k\in M_{p,1}(\mathbb{R})$, there exists an unbiased
estimator $a_{1}^{t}y_{1}$, $a_{1}\in M_{n,1}(\mathbb{R})$, such that
$V(a_{1}^{t}y_{1})\leq^{L}V(a_{2}^{t}y_{2})$ (here $V(a_{i}^{t}y_{i})$,
$i\in\left\{  1,2\right\}  $, is the variance of $a_{i}^{t}y_{i}$). If this
condition is satisfied, we write $L_{1}\succeq L_{2}$. In \cite{Stepniak},
St\k{e}pniak proved that
\[
L_{1}\succeq L_{2}\quad\text{if and only if}\quad M_{2}\leq^{L}M_{1}%
\]
where $M_{i}=X_{i}^{t}\left(  D_{i}+X_{i}X_{i}^{t}\right)  ^{-}X_{i}$,
$i\in\left\{  1,2\right\}  $. Moreover, St\k{e}pniak noted that when
$\operatorname{Im}X_{i}\subseteq\operatorname{Im}D_{i}$, $i\in\left\{
1,2\right\}  $, we may replace $X_{i}^{t}\left(  D_{i}+X_{i}X_{i}^{t}\right)
^{-}X_{i}$ with $X_{i}^{t}D_{i}^{.-}X_{i}$. When $D_{i}=X_{i}$, $i\in\left\{
1,2\right\}  $, these matrices may be further simplified to $M_{i}=X_{i}%
^{t}D_{i}^{.-}X_{i}=D_{i}^{t}D_{i}^{-}D_{i}=D_{i}D_{i}^{-}D_{i}=D_{i}$. For
such models $L_{1}=(y_{1},D_{1}\beta,\sigma^{2}D_{1})$ and $L_{2}=$
$(y_{2},D_{2}\beta,\sigma^{2}D_{2})$ we thus have
\begin{equation}
L_{1}\succeq L_{2}\quad\text{if and only if}\quad D_{2}\leq^{L}D_{1}.
\label{eq_order_models}%
\end{equation}
Let $n>1$. For a random $n\times1$ vector of observed quanitities $y_i$, an unspecified $n\times1$ vector
$\beta_i$, and an unspecified nonnegative scalar $\sigma^{2}_i$, let $\mathcal{L}_i$ be the set of all linear models $L_i=(y_i,D\beta_i
,\sigma^{2}_i D)$ where $D\in H_{n}^{+}(\mathbb{R)}$ may vary from model to
model. Define a map $\psi:\mathcal{L}_1\rightarrow \mathcal{L}_2$ with $\psi((y_1,D\beta_1,\sigma_{1}^{2}D))=
(y_2,\varphi(D)\beta_2,\sigma_{2}^{2}\varphi(D))$ where $\varphi:H_{n}^{+}(\mathbb{R)}$
$\rightarrow H_{n}^{+}(\mathbb{R)}$ is a surjective map. Suppose
\[
L_{1_a}\succeq L_{1_b}\quad\text{if and only if}\quad\psi(L_{1_a})\succeq\psi
(L_{1_b})
\]
for every $L_{1_a},L_{1_b}\in\mathcal{L}_1$. This assumption may be reformulated as $D_{1_b}%
\leq^{L}D_{1_a}$ if and only if $\varphi(D_{1_b})\leq^{L}\varphi(D_{1_a})$,
$D_{1_a},D_{1_b}\in$ $H_{n}^{+}(\mathbb{R)}$, and therefore Theorem
\ref{Theorem_main:_Loewner} completely determines the form of any such a map
$\psi$.
\end{remark}

\section{Preservers of the minus partial order}

Let $A,B\in M_{n}(\mathbb{F)}$. It is known (see e.g. \cite[page 149]{Legisa})
that
\begin{equation}
A\leq^{-}B\quad\text{if and only if}\quad\operatorname{Im}B=\operatorname{Im}%
A\oplus\operatorname{Im}(B-A)\quad\text{if and only if}\quad RAL\leq^{-}RBL
\label{eq_minus}%
\end{equation}
for any invertible $R,L\in M_{n}(\mathbb{F)}$. Let $A,B\in M_{n}(\mathbb{C)}$.
If there exists an invertible matrix $S\in M_{n}(\mathbb{C)}$ such that

\begin{itemize}
\item[a)] $B=$ $SAS^{t}$, then we say that $A$ and $B$ are congruent;

\item[b)] $B=$ $SAS^{\ast}$, then we say that $A$ and $B$ are *congruent.
\end{itemize}

By Sylvester's law of inertia (see \cite[page 282]{Horn}) two (Hermitian)
matrices $A,B\in H_{n}(\mathbb{C)}$ are *congruent if and only if they have
the same inertia, i.e. they have the same number of positive eigenvalues and
the same number of negative eigenvalues. Two (real symmetric) matrices $A,B\in
H_{n}(\mathbb{R)}$ are *congruent via a complex matrix if and only if they are
congruent via a real matrix \cite[page 283]{Horn}. So, Sylvester's law for the
real case states that $A,B\in H_{n}(\mathbb{R)}$ are congruent via an
invertible $S\in M_{n}(\mathbb{R)}$ (i.e. $B=$ $SAS^{t}$) if and only if $A$
and $B$ have the same number of positive eigenvalues and the same number of
negative eigenvalues. Note that congruent (respectively, *congruent) matrices
have the same rank \cite[page 281]{Horn}.

The next theorem gives a characerization of the minus partial order on the
cone of all positive semidefinite matrices. Observe first that if $A$ is an
$n\times n$ zero matrix, then $A\leq^{-}B$ for every $B\in M_{n}(\mathbb{F)}$
(see e.g. (\ref{eq_minus})).

\begin{theorem}
\label{Theorem_charact_minus}Let $A,B\in H_{n}^{+}(\mathbb{F)}$ and $A\neq0$.
Then $A\leq^{-}B$ if and only if there exists an invertible matrix $S\in M_{n}$
such that
\[
A=S\left[
\begin{array}
[c]{cc}%
I_{r} & 0\\
0 & 0
\end{array}
\right]  S^{\ast}\quad\text{and}\quad B=S\left[
\begin{array}
[c]{cc}%
I_{s} & 0\\
0 & 0
\end{array}
\right]  S^{\ast}%
\]
where $I_{r}$ and $I_{s}$ are $r\times r$ and $s\times s,$ $s\leq n$, identity
matrices, respectively, and $r<s$ if $A\neq B$, and $r=s$, otherwise.\newline%
(Obviously, in case when $s=n$, the zeros on the right-hand side of the
formula for $B$ are absent.)
\end{theorem}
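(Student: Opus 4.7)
The plan is to dispatch the ``if'' direction immediately from~(\ref{eq_minus}) and to reduce the ``only if'' direction to a simultaneous $\ast$-congruent canonical form via the spectral theorem. For the ``if'' direction, if $A = SE_rS^{\ast}$ and $B = SE_sS^{\ast}$ with $E_k$ the $n \times n$ matrix of the statement and $S$ invertible, then by~(\ref{eq_minus}) the relation $A \leq^- B$ is equivalent to $E_r \leq^- E_s$; the latter is immediate from~(\ref{eq_rank_minus}) since $\operatorname{rank}(E_s - E_r) = s - r = \operatorname{rank}(E_s) - \operatorname{rank}(E_r)$.

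For the ``only if'' direction, set $r = \operatorname{rank} A$ and $s = \operatorname{rank} B$. Equation~(\ref{eq_minus}) gives $\operatorname{Im} A \subseteq \operatorname{Im} B$. Applying the spectral theorem to $B \in H_n^+(\mathbb{F})$ produces $U \in M_{n,s}(\mathbb{F})$ of full column rank with $B = UU^{\ast}$, so $\operatorname{Im} U = \operatorname{Im} B$. The inclusion $\operatorname{Im} A \subseteq \operatorname{Im} U$ and the Hermiticity of $A$ together force $A = UCU^{\ast}$ for a unique matrix $C \in M_s(\mathbb{F})$ (explicitly $C = (U^{\ast}U)^{-1}U^{\ast} A U (U^{\ast}U)^{-1}$), and a direct check shows that this $C$ lies in $H_s^+(\mathbb{F})$ with $\operatorname{rank} C = r$.

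The crucial step is to show that $C$ is an orthogonal projection. Since $B - A = U(I_s - C)U^{\ast}$ and $U$ has full column rank, $\operatorname{rank}(B - A) = \operatorname{rank}(I_s - C)$; by~(\ref{eq_rank_minus}) this common rank equals $s - r$. Spectrally decomposing $C = W\Lambda W^{\ast}$ with $W$ unitary (orthogonal when $\mathbb{F} = \mathbb{R}$) and $\Lambda = \operatorname{diag}(\lambda_1, \ldots, \lambda_s)$, $\lambda_i \geq 0$ with exactly $r$ positive entries, the constraint $\operatorname{rank}(I_s - \Lambda) = s - r$ forces every positive $\lambda_i$ to equal $1$. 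Thus $C$ is an orthogonal projection of rank $r$, and $W$ may be chosen so that $W^{\ast}CW = \operatorname{diag}(I_r, 0_{s-r})$. Setting $V := UW$ yields a full-column-rank $V \in M_{n,s}(\mathbb{F})$ with $B = VV^{\ast}$ and $A = V \operatorname{diag}(I_r, 0_{s-r}) V^{\ast}$.

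It then remains to extend $V$ to an invertible $S \in M_n(\mathbb{F})$ by appending $n - s$ columns spanning any linear complement of $\operatorname{Im} V = \operatorname{Im} B$ in $\mathbb{F}^n$; a block multiplication gives $A = SE_rS^{\ast}$ and $B = SE_sS^{\ast}$ exactly as claimed, and the degenerate case $A = B$ (so $r = s$, $C = I_s$) is covered uniformly by the same argument. I expect the principal obstacle to be the spectral argument pinning $C$ down as an orthogonal projection, since this is the unique place where the rank identity characterising the minus partial order is used in full force, going essentially beyond the mere inclusion $\operatorname{Im} A \subseteq \operatorname{Im} B$.
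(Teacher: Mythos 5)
Your argument is correct, and it reaches the canonical form by a genuinely different route than the paper. The paper first applies Sylvester's law of inertia to $B=A+C$ to find an invertible $V$ with $VBV^{\ast}=Q=\mathrm{diag}(I_s,0)$, then uses the rank identity to get $\operatorname{Im}Q=\operatorname{Im}(VAV^{\ast})\oplus\operatorname{Im}(VCV^{\ast})$, passes to the block decomposition of $\mathbb{F}^n=\operatorname{Im}Q\oplus\operatorname{Ker}Q$, shows that the compressions of $VAV^{\ast}$ and $VCV^{\ast}$ to $\operatorname{Im}Q$ are complementary self-adjoint idempotents, and simultaneously diagonalizes them. You instead take a full-rank factorization $B=UU^{\ast}$ from the spectral theorem, compress $A$ to $C=(U^{\ast}U)^{-1}U^{\ast}AU(U^{\ast}U)^{-1}\in H_s^{+}(\mathbb{F})$ (legitimate, since $A=PAP$ for $P$ the orthogonal projection onto $\operatorname{Im}U\supseteq\operatorname{Im}A$), and then use the single identity $\operatorname{rank}(I_s-C)=s-r$ to pin every nonzero eigenvalue of $C$ at $1$; this eigenvalue-counting step replaces the paper's complementary-idempotents analysis and is where, as you say, the minus order is used in full force. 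Your version is somewhat more computational and shorter, avoiding the operator block decomposition entirely; the paper's version is more geometric and makes the symmetric roles of $A$ and $B-A$ explicit (both become diagonal projections under the same congruence), which is the picture its converse direction and later applications lean on. Both proofs conclude identically by extending the $n\times s$ isometry-like factor to an invertible $S\in M_n(\mathbb{F})$, and your handling of the boundary cases ($A=B$, $s=n$) is fine.
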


\begin{proof}
To simplify notation we will use the term *congruent for both *congruent
complex matrices (via an invertible complex matrix) and congruent real
matrices (via a real invertible matrix). Of course, $S^{\ast}=S^{t}$ when
$S\in M_{n}(\mathbb{R)}$.

Let $A\in H_{n}^{+}(\mathbb{F)}$, $A\neq0$. Suppose $A\leq^{-}B$ for some
$B\in H_{n}^{+}(\mathbb{F)}$. By (\ref{eq_rank_minus}), $\operatorname*{rank}%
(B-A)=\operatorname*{rank}(B)-\operatorname*{rank}(A)$. Let $C=B-A$. So,
$\operatorname*{rank}(C)+\operatorname*{rank}(A)=\operatorname*{rank}(A+C)$.
Observe that $A+C$ is positive semidefinite (because $B$ is). All the
eigenvalues of the matrix $A+C$ are thus nonnegative and therefore by
Sylvester's law of inertia it follows that there exists an invertible matrix
$V\in M_{n}(\mathbb{F)}$ such that
\[
V(A+C)V^{\ast}=\left[
\begin{array}
[c]{cc}%
I_{s} & 0\\
0 & 0
\end{array}
\right]
\]
where $I_{s}$ is an $s\times s$, $s\leq n$, identity matrix. Let
\begin{equation}
Q=\left[
\begin{array}
[c]{cc}%
I_{s} & 0\\
0 & 0
\end{array}
\right]  ,\quad A_{1}=VAV^{\ast},\quad\text{and\quad}C_{1}=VCV^{\ast}.
\label{Q_A_1_C_1}%
\end{equation}
Since *congruent matrices have the same rank, it follows that
$\operatorname*{rank}(A+C)=\operatorname*{rank}(Q)$, $\operatorname*{rank}%
(A)=\operatorname*{rank}(A_{1})$, $\operatorname*{rank}%
(C)=\operatorname*{rank}(C_{1})$, and therefore
\begin{equation}
\operatorname*{rank}(Q)=\operatorname*{rank}(A_{1})+\operatorname*{rank}%
(C_{1}). \label{eq_char_1}%
\end{equation}
Observe that
\begin{equation}
\operatorname*{Im}Q=\operatorname*{Im}(V(A+C)V^{\ast})=\operatorname*{Im}%
(VAV^{\ast}+VCV^{\ast})\subseteq\operatorname*{Im}(VAV^{\ast}%
)+\operatorname*{Im}(VCV^{\ast}). \label{eq_char_2}%
\end{equation}
By (\ref{eq_char_1}) and (\ref{eq_char_2}) we have $\operatorname*{Im}%
Q=\operatorname*{Im}A_{1}+\operatorname*{Im}C_{1}$. Also, if
$\operatorname*{Im}A_{1}\cap\operatorname*{Im}C_{1}\neq\left\{  0\right\}  $,
then $\operatorname*{rank}(A_{1})+\operatorname*{rank}(C_{1}%
)>\operatorname*{rank}(Q)$, a contradiction. Thus,
\begin{equation}
\operatorname*{Im}Q=\operatorname*{Im}A_{1}\oplus\operatorname*{Im}%
C_{1}.\newline\label{eq_im}%
\end{equation}
Let $x\in\operatorname*{Ker}Q$, i.e. $Qx=0$. From $Q=A_{1}+C_{1}$, we have
$0=Qx=A_{1}x+C_{1}x$. Since $0=0+0$, it follows by (\ref{eq_im}) that
$A_{1}x=0$ and $C_{1}x=0$. So, $A_{1}(\operatorname*{Ker}Q)=\left\{
0\right\}  $ and $C_{1}(\operatorname*{Ker}Q)=\left\{  0\right\}  $. The
matrix $Q$ is clearly a self-adjoined idempotent, i.e. $Q^{\ast}=Q=Q^{2}$.
So,
\[
\mathbb{F}^{n}=\operatorname*{Im}Q\oplus\operatorname*{Ker}Q
\]
where $(\operatorname{Im}Q)^{\perp}=\operatorname*{Ker}Q$.

Consider the representation of a linear operator $D:\mathbb{F}^{n}%
\rightarrow\mathbb{F}^{n}$ with respect to the decomposition $\mathbb{F}%
^{n}=\operatorname*{Im}Q\oplus\operatorname*{Ker}Q:$
\[
D=\left[
\begin{array}
[c]{cc}%
D_{1} & D_{2}\\
D_{3} & D_{4}%
\end{array}
\right]
\]
where $D_{1}:\operatorname{Im}Q\rightarrow\operatorname{Im}Q$, $D_{2}:$
$\operatorname*{Ker}Q\rightarrow\operatorname{Im}Q$, $D_{3}:$
$\operatorname{Im}Q\rightarrow\operatorname*{Ker}Q$, and $D_{4}%
:\operatorname*{Ker}Q\rightarrow\operatorname*{Ker}Q$ are linear operators.
Since we may consider $A_{1}$ and $C_{1}$ as operators from
$\operatorname*{Im}Q\oplus\operatorname*{Ker}Q$ to itself, we may conclude
that with respect to this decomposition
\[
A_{1}=\left[
\begin{array}
[c]{cc}%
\widetilde{A_{1}} & 0\\
\widetilde{A_{2}} & 0
\end{array}
\right]  \quad\text{and}\quad C_{1}=\left[
\begin{array}
[c]{cc}%
\widetilde{C_{1}} & 0\\
\widetilde{C_{2}} & 0
\end{array}
\right]  .
\]
Observe that $A_{1}^{\ast}=(VAV^{\ast})^{\ast}=VA^{\ast}V^{\ast}=VAV^{\ast
}=A_{1}$. Similarly, $C_{1}^{\ast}=C_{1}$ and hence it follows that
$\widetilde{A_{2}}=0$ and $\widetilde{C_{2}}=0$, i.e.
\[
A_{1}=\left[
\begin{array}
[c]{cc}%
\widetilde{A_{1}} & 0\\
0 & 0
\end{array}
\right]  \quad\text{and}\quad C_{1}=\left[
\begin{array}
[c]{cc}%
\widetilde{C_{1}} & 0\\
0 & 0
\end{array}
\right]  .
\]
Since $\operatorname*{rank}(Q)=s$ (see (\ref{Q_A_1_C_1})), it follows by
(\ref{eq_im}) that
\begin{equation}
\mathbb{F}^{s}=\operatorname*{Im}\widetilde{A_{1}}\oplus\operatorname*{Im}%
\widetilde{C_{1}}. \label{eq_direct}%
\end{equation}
Note that $Qx=x$ for every $x\in\operatorname*{Im}Q$. Let $x\in
\operatorname*{Im}\widetilde{A_{1}}$. On the one hand $x=\widetilde{A_{1}%
}x+\widetilde{C_{1}}x$ and on the other hand $x=x+0$. By (\ref{eq_direct}) it
follows $x=\widetilde{A_{1}}x$ and $0=\widetilde{C_{1}}x$. Let now
$x\in\operatorname*{Im}\widetilde{C_{1}}$. Similarly, then $x=\widetilde
{A_{1}}x+\widetilde{C_{1}}x$ and $x=0+x$ and therefore $0=\widetilde{A_{1}}x$
and $\widetilde{C_{1}}x=x$. So, $\widetilde{A_{1}}$ acts as the identity
operator on $\operatorname*{Im}\widetilde{A_{1}}$ and as the zero operator on
$\operatorname*{Im}\widetilde{C_{1}}$, and similarly, $\widetilde{C_{1}}$ acts
as the identity operator on $\operatorname*{Im}\widetilde{C_{1}}$ and as the
zero operator on $\operatorname*{Im}\widetilde{A_{1}}$. This yields by
(\ref{eq_direct}) that $\operatorname*{Im}\widetilde{A_{1}}=$
$\operatorname*{Ker}\widetilde{C_{1}}$ and $\operatorname*{Ker}\widetilde
{A_{1}}=\operatorname*{Im}\widetilde{C_{1}}$. It follows
that $\widetilde{A_{1}}$ and $\widetilde{C_{1}}$ are pairwise orthogonal
idempotent operators on $\mathbb{F}^{s}$, and therefore $\widetilde{A_{1}}$ and $\widetilde{C_{1}}$ are
simultaneously diagonalizable (see e.g. \cite{Jacobson}). Recall that both
$\widetilde{A_{1}}$ and $\widetilde{C_{1}}$ are self-adjoined. It follows that
there exists a unitary (i.e. an orthogonal in the real case) matrix $U\in
M_{s}(\mathbb{F})$ such that
\[
U\widetilde{A_{1}}U^{\ast}=\left[
\begin{array}
[c]{cc}%
I_{r} & 0\\
0 & 0
\end{array}
\right]  \quad\text{and}\quad U\widetilde{C_{1}}U^{\ast}=\left[
\begin{array}
[c]{cc}%
0 & 0\\
0 & I_{s-r}%
\end{array}
\right]
\]
where $I_{r}$ and $I_{s-r}$ are $r\times r$ and $(s-r)\times(s-r)$ identity
matrices. Let
\[
Z=\left[
\begin{array}
[c]{cc}%
U & 0\\
0 & I_{n-s}%
\end{array}
\right]  .
\]
Note that $Z\in M_{n}(\mathbb{F)}$ is invertible. Then
\[
ZA_{1}Z^{\ast}=\left[
\begin{array}
[c]{cc}%
U & 0\\
0 & I_{n-s}%
\end{array}
\right]  \left[
\begin{array}
[c]{cc}%
\widetilde{A_{1}} & 0\\
0 & 0
\end{array}
\right]  \left[
\begin{array}
[c]{cc}%
U^{\ast} & 0\\
0 & I_{n-s}%
\end{array}
\right]  =\left[
\begin{array}
[c]{cc}%
U\widetilde{A_{1}}U^{\ast} & 0\\
0 & 0
\end{array}
\right]  =\left[
\begin{array}
[c]{ccc}%
I_{r} & 0 & 0\\
0 & 0 & 0\\
0 & 0 & 0
\end{array}
\right]  .
\]
Similarly,
\[
Z\widetilde{C_{1}}Z^{\ast}=\left[
\begin{array}
[c]{ccc}%
0 & 0 & 0\\
0 & I_{s-r} & 0\\
0 & 0 & 0
\end{array}
\right]  .
\]
Let $S=(ZV)^{-1}$. Then by (\ref{Q_A_1_C_1}),
\[
A=V^{-1}A_{1}(V^{\ast})^{-1}=V^{-1}Z^{-1}\left[
\begin{array}
[c]{ccc}%
I_{r} & 0 & 0\\
0 & 0 & 0\\
0 & 0 & 0
\end{array}
\right]  (Z^{\ast})^{-1}(V^{\ast})^{-1}=S\left[
\begin{array}
[c]{ccc}%
I_{r} & 0 & 0\\
0 & 0 & 0\\
0 & 0 & 0
\end{array}
\right]  S^{\ast}.
\]
Similarly,
\[
C=S\left[
\begin{array}
[c]{ccc}%
0 & 0 & 0\\
0 & I_{s-r} & 0\\
0 & 0 & 0
\end{array}
\right]  S^{\ast}%
\]
and therefore
\[
B=A+C=S\left[
\begin{array}
[c]{ccc}%
I_{r} & 0 & 0\\
0 & I_{s-r} & 0\\
0 & 0 & 0
\end{array}
\right]  S^{\ast}.
\]
\newline So,
\[
A=S\left[
\begin{array}
[c]{cc}%
I_{r} & 0\\
0 & 0
\end{array}
\right]  S^{\ast}\quad\text{and}\quad B=S\left[
\begin{array}
[c]{cc}%
I_{s} & 0\\
0 & 0
\end{array}
\right]  S^{\ast}%
\]
where $r\leq s$. Clearly, if $A\neq B$, then $r<s$, and $r=s$, otherwise.

Conversely, let $A=S\left[
\begin{array}
[c]{cc}%
I_{r} & 0\\
0 & 0
\end{array}
\right]  S^{\ast}$ and $B=S\left[
\begin{array}
[c]{cc}%
I_{s} & 0\\
0 & 0
\end{array}
\right]  S^{\ast}$ where $r\leq s$. It follows that
\[
B-A=S\left[
\begin{array}
[c]{ccc}%
0 & 0 & 0\\
0 & I_{s-r} & 0\\
0 & 0 & 0
\end{array}
\right]  S^{\ast}.
\]
Since congruence preserves rank, we have $\operatorname*{rank}%
(B-A)=\operatorname*{rank}(B)-\operatorname*{rank}(A)$ and therefore
$A\leq^{-}B$.
\end{proof}

As an example of an application of the minus partial order in statistics we present the following two corollaries to Theorem \ref{Theorem_charact_minus}. The first result is a
direct corollary to Theorem \ref{Theorem_charact_minus} and the main result
in \cite[page 366]{Baksalary}.

\begin{corollary}
Consider a linear model $(y,X\beta,\sigma^{2}D)$. Then the statistics $Ly$
with $V(Ly)\neq V(y)$ is BLUE of $X\beta$ if and only if the following
conditions hold:
\begin{itemize}
\item[(i)] $LX=X$;

\item[(ii)] $\operatorname*{Im}(LD)\subseteq\operatorname*{Im}X$;

\item[(iii)] There exist an invertible matrix $S\in M_{n}(\mathbb{R)}$ such
that
\[
V(Ly)=S\left[
\begin{array}
[c]{cc}%
I_{r} & 0\\
0 & 0
\end{array}
\right]  S^{t}\quad\text{and}\quad V(y)=S\left[
\begin{array}
[c]{cc}%
I_{s} & 0\\
0 & 0
\end{array}
\right]  S^{t}%
\]
where $I_{r}$ is a $r\times r$ identity matrix, and $I_{s}$ is a $s\times s$
identity matrix with $r<s\leq n$.
\end{itemize}
\end{corollary}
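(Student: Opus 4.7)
The plan is to combine the main result of \cite[page 366]{Baksalary} with Theorem \ref{Theorem_charact_minus}. The cited Baksalary characterization asserts that, in the linear model $(y, X\beta, \sigma^2 D)$, the statistic $Ly$ is BLUE of $X\beta$ if and only if $LX = X$, $\operatorname{Im}(LD) \subseteq \operatorname{Im} X$, and $V(Ly) \leq^- V(y)$ hold simultaneously. The first two of these conditions coincide verbatim with items (i) and (ii) of the corollary, so the remaining task is to translate the minus-order relation $V(Ly) \leq^- V(y)$, combined with the standing hypothesis $V(Ly) \neq V(y)$, into the explicit canonical form (iii).

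Since both $V(Ly)$ and $V(y)$ lie in $H_n^{+}(\mathbb{R})$, this translation is exactly what Theorem \ref{Theorem_charact_minus} delivers whenever $V(Ly) \neq 0$: the relation $V(Ly) \leq^- V(y)$ together with $V(Ly) \neq V(y)$ is equivalent to the existence of an invertible $S \in M_n(\mathbb{R})$ realizing the simultaneous congruence canonical form with blocks $\operatorname{diag}(I_r, 0)$ and $\operatorname{diag}(I_s, 0)$, $r < s \leq n$, precisely as displayed in (iii). I would simply substitute this equivalence into Baksalary's characterization to close the generic case.

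The only point requiring a separate remark is the degenerate subcase $V(Ly) = 0$, which Theorem \ref{Theorem_charact_minus} formally excludes by its hypothesis $A \neq 0$. Here $V(Ly) \leq^- V(y)$ is automatic since the zero matrix is the minimum element with respect to $\leq^-$ (cf.\ the remark preceding Theorem \ref{Theorem_charact_minus}), and by Sylvester's law of inertia one obtains an invertible real $S$ with $V(y) = S\operatorname{diag}(I_s, 0)S^t$, so condition (iii) is still satisfied, now with $r = 0$. I do not anticipate any substantive obstacle: the corollary is essentially a dictionary lookup between the statistical formulation of BLUE in \cite{Baksalary} and the congruence canonical form for the minus order supplied by Theorem \ref{Theorem_charact_minus}.
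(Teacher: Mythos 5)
Your proposal is correct and follows essentially the same route as the paper, which presents this corollary as a direct consequence of the Baksalary--Puntanen characterization of BLUE via the minus order on variance-covariance matrices combined with Theorem \ref{Theorem_charact_minus}; your explicit treatment of the degenerate case $V(Ly)=0$ (where $r=0$ and the $I_r$ block is absent) is a reasonable and harmless addition.
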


Note that for a positive semidefinite matrix
$A\in M_{n}(\mathbb{R)}$, the matrix $W^{t}AW\in M_{m}(\mathbb{R})$ is still
positive semidefinite for any matrix $W\in M_{n,m}(\mathbb{F})$. The following result thus follows directly from
Theorem \ref{Theorem_charact_minus} and \cite[Theorem 1]{BaksalaryHaukeStyan}.

\begin{corollary}
Let $A=\sum\nolimits_{i=1}^{k}A_{i}$ where $A_{i}\in M_{n}(\mathbb{R})$ are
positive semidefinite matrices, $i=1,2,\ldots ,k$. Let the $n\times 1$
random vector $x$ follow a multivariate normal distribution with the mean $%
\mu $ and the variance-covariance matrix $V$. Let $W=(V:\mu )$ be a $n\times
(n+1)$ partitioned matrix. Consider the quadratic forms $Q=x^{t}Ax$ and $%
Q_{i}=x^{t}A_{i}x$, $i=1,2,\ldots ,k$. Then the following statements are
equivalent.
\begin{itemize}
\item[(i)] $Q_{i}$, $i=1,2,\ldots ,k$, are mutually independent and
distributed as chi-squared variables;
\item[(ii)] $Q$ is distributed as a chi-squared variable and there exist
invertible matrices $S_{i}\in M_{n+1}(\mathbb{R)}$ such that
\begin{equation*}
W^{t}A_{i}W=S_{i}\left[
\begin{array}{cc}
I_{r_{i}} & 0 \\
0 & 0%
\end{array}%
\right] S_{i}^{t}\quad \text{and}\quad W^{t}AW=S_{i}\left[
\begin{array}{cc}
I_{s} & 0 \\
0 & 0%
\end{array}%
\right] S_{i}^{t}
\end{equation*}%
for every $i=1,2,\ldots ,k$, where $I_{r_{i}}$ are $r_{i}\times r_{i}$
identity matrices, and $I_{s}$ is a $s\times s$ identity matrix with $%
r_{i}\leq s\leq n+1$. (Here $I_{r_{i}}=0$ if $W^{t}A_{i}W=0$ for some $i\in
\{1,2,\ldots ,k\}$.)
\end{itemize}
\end{corollary}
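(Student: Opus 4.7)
The approach is to combine the statistical characterization due to Baksalary, Hauke and Styan with the canonical-form description of the minus partial order supplied by Theorem \ref{Theorem_charact_minus}. Concretely, \cite[Theorem 1]{BaksalaryHaukeStyan} reformulates (i) in the following way: $Q$ is distributed as a chi-squared variable and $W^{t}A_{i}W \leq^{-} W^{t}AW$ for every $i\in\{1,\ldots,k\}$. Hence the proof reduces to translating this family of minus-order inequalities into the canonical matrix form appearing in (ii), and back again.

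First I would observe that for each $i$ the matrices $W^{t}A_{i}W$ and $W^{t}AW=\sum_{i=1}^{k}W^{t}A_{i}W$ belong to $H_{n+1}^{+}(\mathbb{R})$: this is because $A_{i}\in H_{n}^{+}(\mathbb{R})$ and the transformation $B\mapsto W^{t}BW$ preserves positive semidefiniteness, as recalled in the paragraph immediately preceding the statement. Consequently Theorem \ref{Theorem_charact_minus} applies directly to each pair $(W^{t}A_{i}W,\,W^{t}AW)$ whenever $W^{t}A_{i}W\neq 0$, while the degenerate case $W^{t}A_{i}W=0$ is handled by the convention $I_{r_{i}}=0$ already stipulated in the statement.

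Assuming (i), the reformulation above together with the forward direction of Theorem \ref{Theorem_charact_minus} produces, for each $i$, an invertible matrix $S_{i}\in M_{n+1}(\mathbb{R})$ and an integer $r_{i}\leq s$ realising the required block decomposition. The key consistency point is that the size $s$ of the identity block on the right-hand side of the representation of $W^{t}AW$ is the same for all $i$: indeed $s=\operatorname*{rank}(W^{t}AW)$, and rank is preserved under congruence, so $s$ is intrinsic to $W^{t}AW$ and independent of $i$. Conversely, if (ii) holds, then the converse direction of Theorem \ref{Theorem_charact_minus} gives $W^{t}A_{i}W \leq^{-} W^{t}AW$ for every $i$, and combining this with the hypothesis that $Q$ is chi-squared distributed, \cite[Theorem 1]{BaksalaryHaukeStyan} delivers (i).

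The main obstacle, though largely a matter of bookkeeping, is precisely this coherence issue: one must verify that the upper-left block size $s$ is common to all $k$ representations (resolved by the congruence-invariance of rank) and that the edge case $W^{t}A_{i}W=0$ is correctly absorbed into the formulation by setting $I_{r_{i}}=0$. Beyond these, no deeper technical difficulties arise, as all substantive content is already packaged in Theorem \ref{Theorem_charact_minus} and in the Baksalary--Hauke--Styan characterization.
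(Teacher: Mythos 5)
Your proposal is correct and follows exactly the route the paper intends: the paper offers no written proof beyond asserting that the corollary ``follows directly'' from Theorem~\ref{Theorem_charact_minus} and \cite[Theorem 1]{BaksalaryHaukeStyan}, which is precisely the reduction you carry out. Your additional remarks on the congruence-invariance of the block size $s$ and on the degenerate case $W^{t}A_{i}W=0$ only make explicit details the paper leaves implicit.
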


With our final result we will describe the form of all additive, surjective
maps on $H_{n}^{+}(\mathbb{R)}$, $n\geq3$, that preserve the minus partial
order in both directions. Denote by $E_{ij}$ the $n\times n$ matrix with all
entries equal to zero except the $(i,j)$-entry which is equal to one. Let
$E_{k}=E_{11}+E_{22}+\ldots+E_{kk}$. For $A,B\in M_{n}(\mathbb{R)}$ we will
write $A<^{-}B$ when $A\leq^{-}B$ and $A\neq B$. We will denote by $x\otimes
y^{t}$ a rank one linear operator on $\mathbb{R}^{n}$ defined with $(x\otimes
y^{t})z=\left\langle z,y\right\rangle x$ for every $z\in\mathbb{R}^{n}$ (here
$\left\langle z,y\right\rangle =y^{t}z$). Note that every rank-one linear
operator on $\mathbb{R}^{n}$ may be written in this form and that $P\in
P_{n}(\mathbb{F})$ is of rank-one if and only if $P=x\otimes x^{t}$ for some
$x\in\mathbb{R}^{n}$ with $\left\Vert x\right\Vert =1$.

\begin{theorem}
Let $n\geq3$ be an integer. Then $\varphi:H_{n}^{+}(\mathbb{R})\rightarrow
H_{n}^{+}(\mathbb{R})$ is a surjective, additive map that preserves the minus
order $\leq^{-}$ in both directions if and only if there exists an invertible
matrix $S\in M_{n}(\mathbb{R)}$ such that
\[
\varphi(A)=SAS^{t}%
\]
for every $A\in H_{n}^{+}(\mathbb{R})$.
\end{theorem}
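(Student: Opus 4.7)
The plan is to reduce the theorem to Legiša's adjacency classification (Proposition~\ref{Prop_Legisa}) by additively extending $\varphi$ to all of $H_n(\mathbb{R})$. The preliminary step is to establish that $\varphi$ is bijective with $\varphi(0)=0$ and preserves the set of rank-one matrices. Additivity immediately gives $\varphi(0)=0$, and injectivity follows from antisymmetry of $\leq^{-}$ together with two-sided preservation. To obtain rank-one preservation I would give an intrinsic order-theoretic characterization using Theorem~\ref{Theorem_charact_minus}: a nonzero $A\in H_n^+(\mathbb{R})$ has rank~$1$ if and only if $\{B\in H_n^+(\mathbb{R}):B\leq^{-}A\}=\{0,A\}$. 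Indeed, if $\operatorname{rank}(A)=1$, any $B$ with $0<^{-}B\leq^{-}A$ would force $\operatorname{rank}(B)=1$ and $\operatorname{rank}(A-B)=0$, so $B=A$; conversely, the canonical form in Theorem~\ref{Theorem_charact_minus} produces strictly intermediate elements whenever $\operatorname{rank}(A)\geq 2$. Since $\varphi$ preserves $\leq^{-}$ in both directions, it therefore maps rank-one matrices to rank-one matrices.

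Next I would extend $\varphi$ to $\widehat{\varphi}:H_n(\mathbb{R})\to H_n(\mathbb{R})$ by $\widehat{\varphi}(C)=\varphi(C^+)-\varphi(C^-)$, with well-definedness, additivity, and bijectivity arguments proceeding essentially verbatim as in steps~7--10 of the proof of Theorem~\ref{Theorem_main:_Loewner} (using additivity of $\varphi$ in place of the linearity derived there). I then claim $\widehat{\varphi}$ preserves adjacency: if $A,B\in H_n(\mathbb{R})$ with $\operatorname{rank}(A-B)=1$, the spectral theorem gives $A-B=\alpha xx^t$ for some $\alpha\neq 0$ and unit vector $x$, so exactly one of $(A-B)^\pm$ is the rank-one PSD matrix $|\alpha|xx^t$ and the other is $0$. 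Additivity then yields $\widehat{\varphi}(A)-\widehat{\varphi}(B)=\widehat{\varphi}(A-B)=\pm\varphi(|\alpha|xx^t)$, which has rank one by the previous step.

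Finally, since $\widehat{\varphi}(0)=0$, Proposition~\ref{Prop_Legisa} applies and yields either the degenerate form $\widehat{\varphi}(A)=f(A)B_0$ with $B_0$ a fixed rank-one matrix, or $\widehat{\varphi}(A)=cRAR^t$ for some $c\in\{-1,1\}$ and invertible $R\in M_n(\mathbb{R})$. The first alternative is incompatible with surjectivity of $\varphi$ onto $H_n^+(\mathbb{R})$, because its image would lie inside the one-dimensional span $\mathbb{R}B_0$, whereas $H_n^+(\mathbb{R})$ contains matrices of rank $n\geq 3$. In the second alternative, specializing $A=I$ gives $cRR^t=\varphi(I)\in H_n^+(\mathbb{R})$, and since $RR^t$ is positive definite this forces $c=1$. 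Setting $S=R$ produces the desired form $\varphi(A)=SAS^t$. I expect the main technical hurdle to be the first-paragraph rank-one characterization via the minus order, which relies on careful bookkeeping with the canonical form of Theorem~\ref{Theorem_charact_minus}; once this is in place, the extension to $H_n(\mathbb{R})$ and the reduction to Legiša's theorem are routine adaptations of the arguments already established for the Löwner case.
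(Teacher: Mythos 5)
Your proposal is correct, but it takes a genuinely different route from the paper's own proof. The paper never extends $\varphi$ beyond $H_{n}^{+}(\mathbb{R})$: it shows $\varphi$ preserves rank via maximal chains $0<^{-}E_{1}<^{-}\cdots<^{-}E_{n}$, normalizes to $\varphi(I)=I$, uses the equivalence $P\leq^{-}I\Leftrightarrow P^{2}=P$ to see that $\varphi$ preserves orthogonal projections and hence induces an automorphism of the subspace lattice of $\mathbb{R}^{n}$, invokes Mackey's theorem (this is exactly where $n\geq3$ enters) to realize that automorphism by an invertible $T$ with $T^{t}T=\alpha I$, and finally obtains homogeneity on rank-one projections from an additive, monotone Cauchy-type functional equation. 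You instead transplant the L\"{o}wner argument: an order-theoretic detection of rank one (which for $\leq^{-}$ is even simpler than for $\leq^{L}$, since by (\ref{eq_rank_minus}) the down-set of a rank-one element is exactly $\{0,A\}$), the additive extension $\widehat{\varphi}(C)=\varphi(C^{+})-\varphi(C^{-})$, adjacency preservation, and Proposition \ref{Prop_Legisa}. All steps check out: the identity $\widehat{\varphi}(A-B)=\varphi(A)-\varphi(B)$ and additivity of $\widehat{\varphi}$ need only additivity of $\varphi$, and neither injectivity nor order preservation of $\widehat{\varphi}$ is actually required for your endgame, so your appeal to step 10 ``verbatim'' (whose injectivity argument passes through step 9's order preservation, which is not available here) is a harmless overstatement --- if you want injectivity of $\widehat{\varphi}$, derive it from $\varphi(E^{+})=\varphi(E^{-})\Rightarrow E^{+}=E^{-}\Rightarrow E=0$. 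As for what each approach buys: yours is shorter, bypasses Theorem \ref{Theorem_charact_minus} almost entirely, and needs only $n\geq2$ since Proposition \ref{Prop_Legisa} holds for $n\geq2$, so it would actually strengthen the theorem and settle the additive $n=2$ case left open in the paper's closing remark; the paper's route yields structural by-products (the induced projection-lattice automorphism and the scaling function on rank-one projections) that are the natural foothold for the conjecture that additivity can be dropped, where no additive extension to $H_{n}(\mathbb{R})$ is available. Do also record the trivial converse direction, which the paper dispatches with (\ref{eq_minus}).
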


\begin{proof}
Let $\varphi:H_{n}^{+}(\mathbb{R})\rightarrow H_{n}^{+}(\mathbb{R})$ be of the
form $\varphi(A)=SAS^{t}$, $A\in H_{n}^{+}(\mathbb{R})$, where $S\in
M_{n}(\mathbb{R)}$ is an invertible matrix. Then $\varphi$ preserves by
(\ref{eq_minus}) the order $\leq^{-}$ in both directions and is clearly
surjective and additive.

Conversely, let $\varphi:H_{n}^{+}(\mathbb{R})\rightarrow H_{n}^{+}%
(\mathbb{R})$ be a surjective, additive map that preserves the order $\leq
^{-}$ in both directions. We will again split the proof into several steps.

1. $\varphi$ \textit{is bijective and }$\varphi(0)=0$\textit{. }Since
$\leq^{-}$ is a partial order and since $\varphi$ preserves this order in both
directions, the proof that $\varphi$ is bijective and that $\varphi(0)=0$ may
be the same as in the first two steps of Theorem \ref{Theorem_main:_Loewner}.

2. $\varphi$ \textit{preserves the rank, i.e. }$\operatorname*{rank}%
(A)=\operatorname*{rank}(\varphi(A))$ \textit{for every} $A\in H_{n}%
^{+}(\mathbb{R})$.
Let $A\in H_{n}^{+}(\mathbb{R})$ with $\operatorname*{rank}%
(A)=k$. By Sylvester's law of inertia there exists an invertible matrix $R\in
M_{n}(\mathbb{R)}$ such that $E_{k}=RAR^{t}$. Clearly (see
(\ref{eq_rank_minus})),
\[
0<^{-}E_{1}<^{-}E_{2}<^{-}\ldots<^{-}E_{n}=I.
\]
Since congruence preserves rank, we have by (\ref{eq_minus})
\[
0<^{-}R^{-1}E_{1}(R^{-1})^{t}<^{-}R^{-1}E_{2}(R^{-1})^{t}<^{-}\ldots
<^{-}R^{-1}E_{k}(R^{-1})^{t}<^{-}\ldots<^{-}R^{-1}E_{n}(R^{-1})^{t}.
\]
From $(R^{-1})^{t}=(R^{t})^{-1}$ and since $\varphi$ preserves the order
$\leq^{-}$ and is injective, we obtain
\begin{equation}
0<^{-}\varphi(R^{-1}E_{1}(R^{t})^{-1})<^{-}\varphi(R^{-1}E_{2}(R^{t}%
)^{-1})<^{-}\ldots<\varphi(A)<^{-}\ldots<^{-}\varphi(R^{-1}(R^{t})^{-1}).
\label{rank_phi_minus}%
\end{equation}

Let $C,D\in M_{n}(\mathbb{R)}$ with $C<^{-}D$ and $\operatorname*{rank}%
(C)=\operatorname*{rank}(D)$. Then by (\ref{eq_rank_minus}),
$\operatorname*{rank}(D-C)=0$ and therefore $D=C$, a contradiction. So, if
$C<^{-}D$, then $\operatorname*{rank}(C)<\operatorname*{rank}(D)$.

Every succeeding matrix in (\ref{rank_phi_minus}) has the rank that is
strictly greater then its predecessor. Since $\operatorname*{rank}%
\varphi(R^{-1}(R^{t})^{-1})\leq n$, it follows that $\operatorname*{rank}%
\varphi(R^{-1}(R^{t})^{-1})=n$ and therefore $\operatorname*{rank}%
(\varphi(A))=k$.

3. \textit{We may without loss of generality assume that} $\varphi(I)=I$. By
the previous step, $\varphi(I)=B$ where $B\in H_{n}^{+}(\mathbb{R})$ is an
invertible (positive definite) matrix. It follows that there exists a positive
definite matrix $\sqrt{B}\in H_{n}^{+}(\mathbb{R})$ such that $\varphi
(I)=\sqrt{B}\sqrt{B}$. Let $\psi:$ $H_{n}^{+}(\mathbb{R})\rightarrow H_{n}%
^{+}(\mathbb{R})$ be defined with
\[
\psi(A)=\left(  \sqrt{B}\right)  ^{-1}\varphi(A)\left(  \sqrt{B}\right)
^{-1}.
\]
Then $\psi$ is a bijective map that preserves the order $\leq^{-}$ in
both directions. Also, $\psi(I)=I$. We will thus from now on assume that
\[
\varphi(I)=I.
\]

4. \textit{There exists a bijective, linear map }$T:\mathbb{R}^{n}%
\rightarrow\mathbb{R}^{n}$ \textit{such that for every }$P\in P_{n}%
(\mathbb{R})$ \textit{the matrix }$\varphi(P)$ \textit{is the orthogonal
projection matrix on }$T(\operatorname{Im}P)$\textit{, i.e. }%
\[
\mathit{\ }\varphi(P)=P_{T(\operatorname{Im}P)}.
\]

Let $P\in M_{n}(\mathbb{R})$ be an idempotent matrix, i.e. $P^{2}=P$. Then
$\mathbb{R}^{n}=\operatorname{Im}P\oplus\operatorname*{Ker}P=\operatorname{Im}%
P\oplus\mathrm{\operatorname{Im}}(I-P)$ and therefore by (\ref{eq_minus}),
$P\leq^{-}I$. Moreover, if $Q\in M_{n}(\mathbb{R)}$ is an idempotent matrix
and if $A\leq^{-}Q$ for $A\in M_{n}(\mathbb{R)}$, then by e.g. \cite[Lemma
2.9]{MarovtRakicDjodjevic}, $A^{2}=A$. Thus for $P\in M_{n}(\mathbb{R})$ we
have
\[
P\leq^{-}I\quad\text{if and only if}\quad P^{2}=P.
\]
Let now $P\in$ $P_{n}(\mathbb{R})$, i.e. $P$ is a symmetric and idempotent
matrix. It follows that $P\leq^{-}I$ and therefore $\varphi(P)\leq^{-}%
\varphi(I)=I$. So, $\varphi(P)$ is an idempotent matrix and by the definition of
the map $\varphi$ also symmetric, i.e. $\varphi(P)\in$ $P_{n}(\mathbb{R})$.
Since $\varphi^{-1}$ has the same properties as $\varphi$, we may conclude
that
\[
P\in P_{n}(\mathbb{R})\quad\text{if and only if}\quad\varphi(P)\in
P_{n}(\mathbb{R}),
\]
i.e. $\varphi$ preserves the set of all orthogonal projection matrices. Recall
that we may identify subspaces of $\mathbb{R}^{n}$ with elements of
$P_{n}(\mathbb{R})$. Let $\mathcal{C}(\mathbb{R}^{n})$ be the lattice of all
subspaces of $\mathbb{R}^{n}$. It follows that the map $\varphi$ induces a
lattice automorphisms, i.e. a bijective map $\tau:\mathcal{C}(\mathbb{R}%
^{n})\rightarrow\mathcal{C}(\mathbb{R}^{n})$ such that
\[
M\subseteq N\quad\text{if and only if}\quad\tau(M)\subseteq\tau(N)
\]
for all $M,N\in\mathcal{C}(\mathbb{R}^{n})$. In \cite[page 246]{Mackey} (see
also \cite[pages 820 and 823]{FillmoreLongstaff} or \cite[page 82]{Pankov})
Mackey proved that for $n\geq3$ every such a map is induced by an invertible
linear operator, i.e. there exists an invertible linear operator $T:$
$\mathbb{R}^{n}\rightarrow\mathbb{R}^{n}$ such that $\tau(M)=T(M)$ for every
$M\in$ $\mathcal{C}(\mathbb{R}^{n})$. For the map $\varphi$ it follows that
\begin{equation}
\varphi(P)=P_{T(\operatorname{Im}P)} \label{eq_projectors}%
\end{equation}
for every $P=P_{\operatorname{Im}P}\in P_{n}(\mathbb{R})$.

5. \textit{We may without loss of generality assume that }$\varphi(P)=P$
\textit{for every }$P\in P_{n}(\mathbb{R})$. Let $x\in\mathbb{R}^{n}$ with
$\left\Vert x\right\Vert =1$. Recall that then $x\otimes x^{t}\in
P_{n}(\mathbb{R})$ is of rank-one. So, by steps 2 and 4 there exists
$a\in\mathbb{R}^{n}$ with $\left\Vert a\right\Vert =1$ such that
\[
\varphi(x\otimes x^{t})=a\otimes a^{t}.
\]
Let $y\in\mathbb{R}^{n}$, $\left\Vert y\right\Vert =1$, and $\left\langle
x,y\right\rangle =0$. We have $\varphi(y\otimes y^{t})=b\otimes b^{t}$ for
some $b\in\mathbb{R}^{n}$, $\left\Vert b\right\Vert =1$. Note that $x\otimes
x^{t}+y\otimes y^{t}\in P_{n}(\mathbb{R})$ and that it is of rank-two. It
follows that $\varphi(x\otimes x^{t}+y\otimes y^{t})$ is a rank-two orthogonal
projection matrix. Since $\varphi$ is additive, we obtain
\[
\varphi(x\otimes x^{t}+y\otimes y^{t})=a\otimes a^{t}+b\otimes b^{t}.
\]
Since this is a rank-two matrix, we may conclude that $a$ and $b$ are linearly
independent vectors. Moreover, from
\[
\left(  a\otimes a^{t}+b\otimes b^{t}\right)  ^{2}=a\otimes a^{t}+b\otimes
b^{t}%
\]
and since $\left\Vert a\right\Vert =\left\Vert b\right\Vert =1$ we get%
\[
\left\langle z,a\right\rangle a+\left\langle z,b\right\rangle b+\left\langle
z,a\right\rangle \left\langle a,b\right\rangle b+\left\langle z,b\right\rangle
\left\langle b,a\right\rangle a=\left\langle z,a\right\rangle a+\left\langle
z,b\right\rangle b
\]
and thus $\left\langle z,a\right\rangle \left\langle a,b\right\rangle
b=-\left\langle z,b\right\rangle \left\langle b,a\right\rangle a$ for every
$z\in\mathbb{R}^{n}$. Let $z=a$ and assume that $\left\langle a,b\right\rangle
\neq0$. Then $b=-\left\langle b,a\right\rangle a$, i.e. $a$ and $b$ are
linearly dependent, a contradiction. It follows that
\[
\left\langle a,b\right\rangle =0.
\]
On the one hand, $\operatorname{Im}\varphi(x\otimes x^{t})=$Lin$\{a\}$ and on
the other hand by (\ref{eq_projectors}) $\operatorname{Im}\varphi(x\otimes
x^{t})=T\left(  \text{Lin}\{x\}\right)  =$Lin$\{Tx\}$. It follows that $a$ and
$Tx$ are linearly dependent, i.e. $a=\mu Tx$ for some $\mu\in\mathbb{R}%
\backslash\{0\}$. Similarly, there exists $\nu\in\mathbb{R}\backslash\{0\}$
such that $b=\nu Ty$. This yields
\[
0=\left\langle \mu Tx,\nu Ty\right\rangle =\mu\nu\left\langle
Tx,Ty\right\rangle =\mu\nu\left\langle T^{t}Tx,y\right\rangle
\]
and therefore $\left\langle T^{t}Tx,y\right\rangle =0$. This equation holds
for every $y\in\mathbb{R}^{n}$ with $\left\Vert y\right\Vert =1$ and
$\left\langle x,y\right\rangle =0$. Since $\left\langle T^{t}Tx,y\right\rangle
=\left\Vert x\right\Vert \left\Vert y\right\Vert \left\langle T^{t}T\frac
{x}{\left\Vert x\right\Vert },\frac{y}{\left\Vert y\right\Vert }\right\rangle
$, we may conclude that for any fixed $x\in\mathbb{R}^{n}$ we have
$\left\langle T^{t}Tx,y\right\rangle =0$ for every $y\in\mathbb{R}^{n}$ with
$\left\langle x,y\right\rangle =0$. So, $T^{t}Tx$ is a scalar multiple of $x$,
i.e. $T^{t}T$ and $I$ are locally linearly dependent. It is known that for
linear operators of rank at least $2$, local linear dependence implies
(global) linear dependence. Note that $T^{t}T\in H_{n}^{+}(\mathbb{R})$.
Therefore,
\[
T^{t}T=\alpha I
\]
for some scalar $\alpha>0$. Let now $Q=\frac{1}{\sqrt{\alpha}}T$. It follows
that $Q^{t}Q=\frac{1}{\alpha}T^{t}T=I$. So, $Q$ is a linear isometry and since
it is also invertible (and thus surjective), it is also coisometry ($QQ^{t}%
=I$). For any $P\in P_{n}(\mathbb{R})$ we thus have $\varphi
(P)=P_{Q(\operatorname{Im}P)}$ where $Q$ is an orthogonal operator,\textit{
}i.e. it may be represented with an (orthogonal) matrix $Q$ where
$QQ^{t}=Q^{t}Q=I$. Therefore, for every $P\in P_{n}(\mathbb{R})$
\[
\operatorname{Im}\varphi(P)=Q(\operatorname{Im}P)=QP(\mathbb{R}^{n}%
)=QPQ^{t}(\mathbb{R}^{n})=\operatorname{Im}QPQ^{t}.
\]
Since clearly $QPQ^{t}\in P_{n}(\mathbb{R})$, we may conclude that
\[
\varphi(P)=QPQ^{t}%
\]
for every $P\in P_{n}(\mathbb{R})$.

Let $\psi:$ $H_{n}^{+}(\mathbb{R}%
)\rightarrow H_{n}^{+}(\mathbb{R})$ be defined with
\[
\psi(A)=Q^{t}\varphi(A)Q.
\]
Then $\psi$ still preserves the order $\leq^{-}$ and is bijective. Moreover
$\psi(P)=P$ for every $P\in P_{n}(\mathbb{R})$. We will thus from on assume
that
\[
\varphi(P)=P
\]
for every $P\in P_{n}(\mathbb{R})$.

6. $\varphi(\lambda P)=\lambda\varphi(P)$ \textit{for every} $P\in
P_{n}(\mathbb{R})$\textit{ of rank-one and every }$\lambda\in\left[
0,\infty\right)  $\textit{. }Let $P\in P_{n}(\mathbb{R})$ be of rank-one and
let $\lambda>0$. Since $\varphi$ preserves the rank, there exists by the
spectral theorem $Q\in P_{n}(\mathbb{R})$ of rank-one and $\mu>0$ such that
\[
\varphi(\lambda P)=\mu Q.
\]
Suppose $P\neq Q$. Then $P+\alpha Q$ is of rank-two for every scalar
$\alpha>0$. Since $\varphi$ is additive, we obtain
\begin{align*}
\varphi(P+\lambda P)  &  =\varphi(P)+\varphi(\lambda P)\\
&  =P+\mu Q.
\end{align*}
So, on the one hand $\varphi(P+\lambda P)$ is of rank-two but on the other
hand $(1+\lambda)P$ is of rank-one and therefore, since $\varphi$ preserves the rank, $\varphi(P+\lambda
P)=\varphi((1+\lambda)P)$ is of rank-one, a contradiction. It follows that $P=Q$ and
therefore there exists a function $f_{P}:\left[  0,\infty\right)  \rightarrow$
$\left[  0,\infty\right)  $ such that
\[
\varphi(\lambda P)=f_{P}(\lambda)P.
\]
Since $\varphi(P)=P$ and $\varphi(0)=0$, we have $f_{P}(1)=1$ and $f_{P}%
(0)=0$. From
\[
f_{P}(\lambda+\mu)P=\varphi((\lambda+\mu)P)=\varphi(\lambda P)+\varphi(\mu
P)=f_{P}(\lambda)P+f_{P}(\mu)P
\]
we may conclude that $f_{p}$ is additive, i.e. $f_{P}(\lambda+\mu)=$
$f_{P}(\lambda)+f_{P}(\mu)$ for every $\lambda,\mu\in\left[  0,\infty\right)
$. Let $r$ be an arbitrary (but fixed) positive integer. Since $f_{p}$ is additive, it follows that
\[
1=f_{P}(1)=f_{P}\left(  r\frac{1}{r}\right)  =rf_{P}\left(  \frac{1}%
{r}\right)
\]
and thus $f_{P}\left(  \frac{1}{r}\right)  =\frac{1}{r}$. Let now $\frac{q}%
{r}$ be any (but fixed) nonnegative rational number (here $q$ and $r$ are nonnegative and positive integers, respectively). Then, again by the additivity of $f_{p}$,
\begin{equation}
f_{P}\left(  \frac{q}{r}\right)  =qf_{P}\left(  \frac{1}{r}\right)  =\frac
{q}{r}. \label{eq_rational}%
\end{equation}
Note that $f_{p}$ is monotone increasing. Namely, for $\lambda,\mu\in\left[
0,\infty\right)  $ with $\lambda\leq\mu$ we have $\mu=\lambda+\nu$ for some
$\nu\geq0$. Thus, $f_{P}(\lambda)\leq f_{P}(\lambda)+f_{P}(\nu)=f_{P}(\mu)$.

Let $\lambda\in\left(  0,\infty\right)  $ be arbitrary. Then
$\lambda$ is a limit of a monotone increasing sequence $\left\{
s_{i}\right\}  $ of nonnegative rational numbers and a limit of a monotone decreasing
sequence $\left\{  z_{i}\right\}  $ of positive rational numbers. Since for every
$i\in\mathbb{N}$, we have by (\ref{eq_rational}), $f_{P}(s_{i})=s_{i}$ and
$f_{p}(z_{i})=z_{i}$, it follows by the monotonicity of $f_{P}$ that
$$f_{P}(\lambda)=\lambda$$ for every $\lambda\in\left(  0,\infty\right)  $. Recall that $f_{P}(0)=0$. It
follows that
\begin{equation}
\varphi(\lambda P)=\lambda\varphi(P) \label{eq_proj_lambda}%
\end{equation}
for every rank-one $P\in P_{n}(\mathbb{R})$ and every $\lambda\in\left[
0,\infty\right)  $.

We are now in position to conclude the proof of the theorem. Let $A\in
H_{n}^{+}(\mathbb{R})$ be arbitrary. By the spectral theorem there exist
pairwise orthogonal rank-one (idempotent and symmetric) matrices $P_{1}%
,P_{2},\ldots,P_{k}\in P_{n}(\mathbb{R})$ and $\lambda_{1},\lambda_{2}%
,\ldots,\lambda_{k}\in$ $\left[  0,\infty\right)  $ such that
\[
A=\lambda_{1}P_{1}+\lambda_{2}P_{2}+\ldots+\lambda_{k}P_{k}.
\]
By (\ref{eq_proj_lambda}) and since $\varphi$ is additive, we may conclude
that
\[
\varphi(A)=A
\]
for every $A\in H_{n}^{+}(\mathbb{R})$. To sum up, taking into account our
assumptions, a surjective, additive map $\varphi:H_{n}^{+}(\mathbb{R}%
)\rightarrow H_{n}^{+}(\mathbb{R})$, $n\geq3$, that preserves the minus order
$\leq^{-}$ in both directions is of the following form:
\[
\varphi(A)=SAS^{t}%
\]
for every $A\in H_{n}^{+}(\mathbb{R})$ where $S\in M_{n}(\mathbb{R})$ is an
invertible matrix.
\end{proof}

\begin{remark}
We believe that the same result holds also without the additivity assumption and it would be interesting to find a proof of this conjecture. Also, we expect that a surjective map $\varphi:H_{2}^{+}(\mathbb{R})\rightarrow
H_{2}^{+}(\mathbb{R})$ that preserves the minus order in both directions has the form $\varphi(A)=SAS^{t}$ for every $A\in H_{2}^{+}(\mathbb{R})$ where $S\in M_{2}(\mathbb{R)}$ is an invertible matrix.
\end{remark}

\section{Concluding remarks}

Many other partial orders may be defined on $M_{n}(\mathbb{F})$ where $\mathbb{F=R}$ or $\mathbb{F=C}$. The star
partial order $\leq^{\ast}$ is defined in the following way (see \cite{Drazin}): For $A,B\in M_{n}(\mathbb{F})$ we write
\[
A\leq^{\ast}B\quad\text{when}\quad A^{\ast}A=A^{\ast}B\text{ and }AA^{\ast
}=BA^{\ast}.
\]
It is known (see e.g. \cite{MitraKnjiga}) that $A\leq^{\ast}B$ implies
$A\leq^{-}B$. Two partial orders that are "related" to the minus and the star
partial orders are the left-star and the-right star partial orders
\cite{BaksalaryMitra}. For $A,B\in M_{n}(\mathbb{F})$ we say that $A$ is below
$B$ with respect to the left-star partial order and write
\[
A\ast\!\!\leq B\quad\text{when}\quad A^{\ast}A=A^{\ast}B\text{ and
}\mathrm{\operatorname{Im}}A\subseteq\operatorname{Im}B.
\]
Similarly, we define the right-star partial order: For $A,B\in M_{n}%
(\mathbb{F})$ we write
\[
A{\leq\!\!\!\ast}\,B\quad\text{when}\quad AA^{\ast}=AB^{\ast}\text{ and
}\mathrm{\operatorname{Im}}A^{\ast}\subseteq\operatorname{Im}B^{\ast}.
\]
It is known (see \cite{MitraKnjiga}) that for $A,B\in M_{n}(\mathbb{F})$,
$A\leq^{\ast}B$ implies both $A\ast\!\!\leq B$ and $A{\leq\!\!\!\ast}\,B$ and
each $A\ast\!\!\leq B$ and $A{\leq\!\!\!\ast}\,B$ implies $A\leq^{-}B$. The
converse implications do not hold in general. Note that the left-star partial order has applications in the theory of linear models (see \cite[Theorem 15.3.7, Corollary
15.3.8]{MitraKnjiga}).

Let $A,B\in H_{n}^{+}%
(\mathbb{F})$. Since then $A^{\ast}A=A^{\ast}B$ if and only if $(A^{\ast}A)^{\ast}=(A^{\ast}B)^{\ast}$ if and only if $A^{2}=BA$ which is equivalent to $AA^{\ast}=BA^{\ast}$, we may conclude that the star, the left-star, and the right-star partial orders are the same partial order on  $H_{n}^{+}(\mathbb{F})$. Maps on $M_{n}(\mathbb{F})$ preserving these orders have already been studied
(see \cite{Guterman, Legisa3}). It would be interesting to describe
(surjective) maps that preserve the star order (in both directions) on the set $H_{n}^{+}(\mathbb{F})$ of all real or complex positive semidefinite matrices.

\end{document}